\newcommand*\MSC[1][1991]{\par\leavevmode\hbox{%
\textit{\,\,\,\,\, #1 Mathematical subject classification:\ }}}
\newcommand\blfootnote[1]{%
  \begingroup
  \renewcommand\thefootnote{}\footnote{#1}%
  \addtocounter{footnote}{-1}%
  \endgroup
}
\def \phi {\varphi}
\def \R {\mathbb{R}}
\def \G{\Gamma}
\newcommand{\Ba}{\mathscr B_x^{(a)}}
\newcommand{\Barr}{\mathscr B_r^{(2\alpha-1)}}
\def \vf{\varphi}
\newcommand{\Rn}{\mathbb R^n}
\newcommand{\Rm}{\mathbb R^m}
\newcommand{\Hn}{\mathbb H^n}
\newcommand{\p}{\partial}
\newcommand{\bG}{\mathbb {G}}
\newcommand{\bg}{\mathfrak g}
\newcommand{\la}{\lambda}
\numberwithin{equation}{section}
\newcommand{\beq}{\begin{equation}}
\newcommand{\bea}[1]{\begin{array}{#1} }
\newcommand{\eeq}{ \end{equation}}
\newcommand{\ea}{ \end{array}}
\newcommand{\nh}{\nabla_H}
\newcommand{\sa}{\langle}
\newcommand{\da}{\rangle}
\newtheorem{theorem}{Theorem}[section]
\newtheorem{lemma}[theorem]{Lemma}
\newtheorem{proposition}[theorem]{Proposition}
\newtheorem{prob}{Problem}
\numberwithin{equation}{section}
\begin{document}

\title[]{Fractal Mehler kernels and nonlinear geometric flows}

{\blfootnote{\MSC[2020]{35K08, 35R11, 53C18, 35K55}}}
\keywords{Mehler kernels. Conformal invariance. Fundamental solutions. Nonlinear flows}

\date{}

\begin{abstract}
In this paper we introduce a two-parameter family of Mehler kernels and connect them to a class of Baouendi-Grushin flows in fractal dimension. We also highlight a link with a geometric fully nonlinear equation and formulate two questions.  
\end{abstract}

\dedicatory{``Beyond this there is nothing but prodigies and fictions, the only inhabitants are the poets and inventors of fables; there is no credit, or certainty any farther." \\\emph{Theseus, Plutarch's Lives}}

\author{Nicola Garofalo}
\address{School of Mathematical and Statistical Sciences\\ Arizona State University}\email[Nicola Garofalo]{nicola.garofalo@asu.edu}

\maketitle

\tableofcontents

\section{Introduction}\label{S:intro}

The heat equation in a stratified nilpotent Lie group $\bG$ is the $L^2$ gradient flow
\[
\frac{\p f}{\p t} = - \frac{\p \mathscr E_2(f)}{\p f}
\]
of the energy 
\[
\mathscr E_2(f) = \frac 12 \int_{\bG} |\nh f|^2,
\]
where $|\nh f|^2 = \sum_{i=1}^m (X_i f)^2$ is the left-invariant \emph{carr\'e du champ} associated with a basis of the horizontal (bracket generating) layer of the Lie algebra.

The discovery of explicit heat kernel formulas in nilpotent Lie groups endowed with special symmetries has played a central role in harmonic analysis, geometric analysis, and the theory of subelliptic operators. In their seminal and independent works \cite{Gav, Hu}, Gaveau and Hulanicki established the representation for the heat kernel in any Lie group \(\bG\) of Heisenberg type
\begin{align}\label{GH}
G_2((z,\sigma),t)
   = \frac{2^k}{(4\pi t)^{\frac m2+k}}
     \int_{\R^k} e^{-\frac{i}{t}\langle\sigma,\lambda\rangle}
     \left(\frac{|\lambda|}{\sinh |\lambda|}\right)^{\frac m2}
     e^{- \frac{|z|^2}{4t}\frac{|\lambda|}{\tanh |\lambda|}}
     \, d\lambda.
\end{align}
Cygan subsequently extended the representation \eqref{GH} to the broader class of stratified nilpotent Lie groups of step two; see \cite{Cy}. Throughout this paper, the symbol \(k\) will always refer to the ``vertical" dimension of $\bG$\footnote{If the Lie algebra \(\bg\) of \(\bG\) decomposes as \(\bg = \mathfrak h \oplus \mathfrak v\), with \([\mathfrak h, \mathfrak h] = \mathfrak v\) and \([\mathfrak h,\mathfrak v] = \{0\}\), we denote by \(m=\dim \mathfrak h\) and \(k=\dim \mathfrak v\). Identifying \(\mathfrak h \cong \R^m\) and \(\mathfrak v\cong \R^k\), the pair \((z,\sigma)\) represents the logarithmic coordinates of the group element \(\exp(z,\sigma)\in \bG\).}.

The prototypical example of Lie groups of Heisenberg type is the $2n+1$-dimensional Heisenberg group \(\Hn\), for which $k=1$; however, beyond \(\Hn\), there exists a rich and natural family of such groups. Notably, in the Iwasawa decomposition $\bG = K A N$ of a simple Lie group of real rank one, the nilpotent component $N$ is always a group of Heisenberg type; see \cite{Ka2}, \cite{Adam85}, and \cite{CDKR}. We recall that such Lie groups can be described as the boundary at infinity of the  real, complex, quaternionic or octonionic hyperbolic space, see \cite[Sec. 9.3]{Mo} and also \cite[Sec. 10]{Pansu89}.

The natural family of dilations in \(\bG\), associated with the step two stratification of its Lie algebra, is given by
\[
\delta_\ell(z,\sigma)=(\ell z,\ell^2\sigma),
\]
from which it follows that the homogeneous dimension of the group is \(Q=m+2k\). This is consistent with the structure of \eqref{GH}, which shows that the heat kernel \(G_2\) is homogeneous of degree \(\kappa=-Q\) with respect to the heat dilations
\[
\delta^{(h)}_\ell((z,\sigma),t)=(\delta_\ell(z,\sigma),\ell^2 t),
\]
see also \cite[(3.2) in Theor. 3.1]{Fo75}.

A remarkable feature of the Fourier integral \eqref{GH} is that it \emph{reveals} the intrinsic Kor\'anyi-Folland gauge\footnote{Such function first appeared in \cite{KV}, see also \cite{Adam85, KoR}.}
\begin{equation}\label{N}
N(z,\sigma) = (|z|^4 + 16|\sigma|^2)^{1/4}
\end{equation}
which appears in the explicit fundamental solution of the horizontal Laplacian $\Delta_H = \sum_{j=1}^m X_j^2$ on $\bG$, first obtained by Folland \cite{Fo} in the Heisenberg group $\Hn$ (see also \cite{FScpam}), and subsequently generalized by Kaplan \cite{Ka} to all groups of Heisenberg type. By the word ``reveals" in the above comment we mean the following: in the well-known Euclidean heat kernel $g(x,t) = (4\pi t)^{-\frac n2} e^{-\frac{|x|^2}{4t}}$, the distance function $|x|$ appears explicitly. In the sub-Riemannian heat kernel \eqref{GH} there exists no hint of the function \eqref{N}. Despite this, we have the following result, which is a by-product of the main conformal theorem in the work \cite{GTaim} (see Cor. 1.3):
\begin{equation}\label{meh}
\int_0^\infty G_2((z,\sigma),t)\, dt
= C(m,k)\ N(z,\sigma)^{-(m+2k-2)},
\end{equation}
where
\[ 
C(m,k) = 2^{\frac m2+2k-2} \Gamma(\frac m4) \Gamma(\frac12(\frac m2 + k -1)) \pi^{-\frac{m+k+1}{2}}.
\]
As a consequence of \eqref{meh}, if we had no a priori knowledge of the important function \eqref{N}, then by running the heat flow \eqref{GH}, we would be forced to discover it.

\subsection{Generalized Mehler kernels}
The first objective of this paper is to introduce the following two-parameter extension of \eqref{GH}:
\begin{align}\label{GHgen}
G_{\alpha,\beta}((z,\sigma),t)
 = \frac{2^k}{(4\pi t)^{\beta}}
   \int_{\R^k} e^{-\frac{i}{t}\langle\sigma,\lambda\rangle}
   \left(\frac{|\lambda|}{\sinh |\lambda|}\right)^{\alpha}
   e^{- \frac{|z|^2}{4t}\frac{|\lambda|}{\tanh |\lambda|}}
   \, d\lambda,
   \qquad \beta>\alpha>0.
\end{align}
One should note that the kernel $G_{\alpha,\beta}$ remains spherically symmetric in both $z$ and $\sigma$, and it is homogeneous of degree $\kappa = - 2\beta$ with respect to the above mentioned heat dilations $\delta^{(h)}_\ell((z,\sigma),t)$. Indeed,  one checks immediately from \eqref{GHgen} that
\[
\delta^{(h)}_\ell G_{\alpha,\beta} = \ell^{-2\beta}\, G_{\alpha,\beta}.
\]

In Theorem \ref{T:gengus} and Proposition \ref{P:limit} we will prove that, up to a computable universal constant, \eqref{GHgen} represents the heat kernel, with pole at the origin, of a degenerate parabolic operator on $\R^+_r\times \R^k_\sigma\times \R^+_t$ which arises naturally from a reflected Bessel process of fractal dimension $m_\alpha = 2\alpha$.

More precisely, consider the degenerate evolution operator
\begin{equation}\label{P}
\mathfrak P_{\alpha,k}=\partial_t-\mathscr L_{\alpha,k}
  = \partial_t - \partial_{rr}
    - \frac{2\alpha-1}{r}\partial_r
    - \frac{r^2}{4}\Delta_\sigma,
\end{equation}
acting on $(r,\sigma,t)\in \R^+_r\times \R^k_\sigma\times \R^+_t$. We call this operator a fractal \emph{Baouendi-Grushin} flow since, when $m = 2\alpha\in \mathbb N$, it represents the action of the operator in $\Rm_z\times \R^k_\sigma\times \R^+_t$
\[
\p_t - \Delta_z - \frac{|z|^2}{4}\Delta_\sigma
\]
on functions $u((r,\sigma),t)$, with $r = |z|$.
In \eqref{P} the Bessel operator
\[
\Barr = \partial_{rr} + \frac{2\alpha-1}{r}\partial_r
\]
is classical and plays a central role in analysis, geometry, and stochastic processes with radial symmetries; see \cite{We1, We, We2, MS, Ta, MO, IM, KT, CS, GVjfa, GVacv}. The operator $\Barr + \Delta_\sigma$ is also known as the \emph{Weinstein operator}, but the novelty here is the degenerate factor $\frac{r^2}{4}$ in front of $\Delta_\sigma$. Such factor makes the analysis more delicate, but also gives a special geometric significance to \eqref{P}. As it will be clear from Theorems \ref{T:gengus} and \ref{T:gen}, the broader perspective of this paper is closely connected to the geometric framework in \cite{BOO}, and especially to that in the ``extension" work \cite{FGMT}, see also the subsequent papers \cite{RTaim, MOZjga, MOZ, RT, FOZ, GTjam, GTpotan}. We mention here that, instead of $\Delta_\sigma$, we can also consider the situation of a Baouendi-Grushin flow associated with two fractional Bessel processes intertwined as in 
\[
\p_t - \Barr - \frac{r^2}{4} \left[\p_{ss} + \frac{k-1}s \p_s\right] = \p_t - \Barr - \frac{r^2}{4} \mathscr B_s^{(k-1)},\ \ \ \ \alpha>0, k>0.
\]   
For this class, we have results corresponding to the ones listed below, but for the sake of exposition, we have preferred to stick with \eqref{P} and avoid additional technical aspects.
 
Our first main result identifies the heat kernel for the Cauchy problem
\begin{equation}\label{cp}
\mathfrak P_{\alpha,k} u = 0,
\qquad
u((r,\sigma),0)=\varphi(r,\sigma),
\end{equation}
subject to the reflected condition
\begin{equation}\label{neu}
\lim_{r\to 0^+} r^{2\alpha-1}\partial_r u((r,\sigma),t)=0.
\end{equation}
In what follows, we will denote by 
\[
\Sigma^+= \{\vf \in C^\infty(\R^+\times \R^k)\mid \gamma_{\ell,m,\beta}(\vf)<\infty,\  \forall \ell, m\in \mathbb N_0,\ \forall \beta\in \mathbb N_0^k\},
\]
where we have let
\[
\gamma_{\ell,m,\beta}(\vf) :=  \underset{(r,\sigma)\in \R^+\times\R^k}{\sup} \left|r^\ell \left(\frac 1r \p_r\right)^m\p^\beta_\sigma \vf(r,\sigma)\right| < \infty.
\]
The family $\gamma_{\ell,m,\beta}$ is a countable collection of seminorms which generates a Frechet space topology on $C^\infty(\R^+\times \R^k)$. Note that since $\gamma_{0,1,0}(\vf)<\infty$, we have $|\p_r \vf(r,\sigma)|\le \gamma_{0,1}(\vf) r$ for any $r>0$ and any $\sigma\in \R^k$.  This guarantees in particular that any function $\vf\in \Sigma^+$ satisfies the Neumann condition \eqref{neu} when $\alpha>0$. 

\begin{theorem}\label{T:gengus}
Let $\varphi\in \Sigma^+$. The unique solution of
\eqref{cp}–\eqref{neu} is
\[
u((r,\sigma),t)
   = \int_0^\infty\!\int_{\R^k}
   \mathscr K_{\alpha,k}((r,\sigma),(\rho,\sigma'),t)\,
     \varphi(\rho,\sigma')
     \, d\sigma' \,\rho^{2\alpha-1} d\rho,
\]
where the \emph{generalized Mehler kernel} is
\begin{align}\label{gus10}
\mathscr K_{\alpha,k}((r,\sigma),(\rho,\sigma'),t)
  = \frac{(r\rho)^{1-\alpha}}{\pi^k (2t)^{k+1}}
    \int_{\R^k} e^{-\frac{i}{t}\langle \sigma'-\sigma,\lambda\rangle}
      \frac{|\lambda|}{\sinh |\lambda|}
      e^{-\frac{|\lambda|}{\tanh |\lambda|}
         \frac{r^2+\rho^2}{4t}}
      I_{\alpha-1}\!\left(
        \frac{|\lambda|\rho r}{2t\sinh |\lambda|}
      \right)
      \, d\lambda.
\end{align}
\end{theorem}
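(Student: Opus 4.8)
\emph{Strategy.} The plan is to eliminate the variable $\sigma\in\R^k$ by a partial Fourier transform, which is natural since the coefficients of $\mathscr L_{\alpha,k}$ in \eqref{P} are independent of $\sigma$. For a sufficiently regular solution $u$ of \eqref{cp}--\eqref{neu}, setting $\hat u(r,\lambda,t)=\int_{\R^k}u((r,\sigma),t)\,e^{-i\langle\sigma,\lambda\rangle}\,d\sigma$ transforms the Cauchy problem into the one-parameter family, indexed by the frequency $\lambda\in\R^k$, of one-dimensional problems
\[
\partial_t\hat u=\Barr\,\hat u-\frac{|\lambda|^2 r^2}{4}\,\hat u,\qquad \hat u(r,\lambda,0)=\hat\varphi(r,\lambda),\qquad \lim_{r\to 0^+}r^{2\alpha-1}\partial_r\hat u=0,
\]
in which we recognize the heat flow of the Bessel--harmonic oscillator $\mathscr L^{(b)}:=\Barr-\tfrac{b^2}{4}r^2$, with $b=|\lambda|$, acting on $L^2(\R^+,r^{2\alpha-1}dr)$ under the reflecting condition at the degenerate endpoint $r=0$. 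Once the heat kernel $M_t^{(b)}(r,\rho)$ of $\mathscr L^{(b)}$ (with respect to $\rho^{2\alpha-1}d\rho$) is produced, Fourier inversion in $\lambda$ yields $\mathscr K_{\alpha,k}((r,\sigma),(\rho,\sigma'),t)=(2\pi)^{-k}\int_{\R^k}e^{i\langle\sigma-\sigma',\lambda\rangle}M_t^{(|\lambda|)}(r,\rho)\,d\lambda$, and the stated formula \eqref{gus10} will follow after the homogenizing substitution $\lambda=\mu/t$. The Fréchet space $\Sigma^+$ is precisely what is needed to license these operations (differentiation under the integral, Fubini, Fourier inversion, and the $r^\ell(\frac1r\partial_r)^m$-controlled decay that forces \eqref{neu}); it is technically convenient to first prove the representation when $\varphi$ is, in addition, Schwartz in $\sigma$, and then remove this extra assumption by density.

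\emph{The one-dimensional kernel.} The operator $-\mathscr L^{(b)}$ is the radial part of a harmonic oscillator of fractal dimension $2\alpha$: a direct computation with the Laguerre ODE shows that its $L^2(r^{2\alpha-1}dr)$-orthonormal eigenfunctions are
\[
\phi_n^{(b)}(r)=\left(\frac{b^\alpha\,n!}{2^{\alpha-1}\,\Gamma(n+\alpha)}\right)^{1/2}e^{-br^2/4}\,L_n^{(\alpha-1)}\!\left(\frac{br^2}{2}\right),\qquad n\in\mathbb N_0,
\]
with eigenvalues $b(\alpha+2n)$, the reflecting condition \eqref{neu} being exactly the condition of self-adjointness at $r=0$. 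Summing the spectral series $M_t^{(b)}(r,\rho)=\sum_{n\ge 0}e^{-b(\alpha+2n)t}\phi_n^{(b)}(r)\phi_n^{(b)}(\rho)$ by the Hille--Hardy identity
\[
\sum_{n\ge 0}\frac{n!}{\Gamma(n+\nu+1)}\,L_n^{(\nu)}(x)\,L_n^{(\nu)}(y)\,w^n=\frac{(xyw)^{-\nu/2}}{1-w}\exp\!\left(-\frac{(x+y)w}{1-w}\right)I_\nu\!\left(\frac{2\sqrt{xyw}}{1-w}\right),\quad 0<w<1,
\]
with $\nu=\alpha-1$, $x=br^2/2$, $y=b\rho^2/2$, $w=e^{-2bt}$, and simplifying (using $1-e^{-2bt}=2e^{-bt}\sinh(bt)$ and $\sinh(bt)+e^{-bt}=\cosh(bt)$), one obtains the Mehler--Bessel kernel
\[
M_t^{(b)}(r,\rho)=\frac{b\,(r\rho)^{1-\alpha}}{2\sinh(bt)}\exp\!\left(-\frac{b(r^2+\rho^2)}{4}\coth(bt)\right)I_{\alpha-1}\!\left(\frac{br\rho}{2\sinh(bt)}\right).
\]
Alternatively one can dispense with the spectral series altogether and verify directly that this expression solves $\partial_t M=\mathscr L^{(b)}_{(r)}M$ (using the Bessel equation for $I_{\alpha-1}$), satisfies \eqref{neu} in $r$ — because $z\mapsto z^{1-\alpha}I_{\alpha-1}(z)$ is an even entire function, so $(r\rho)^{1-\alpha}I_{\alpha-1}(c r\rho)$ is smooth and even in $r$ at $r=0$ and hence $r^{2\alpha-1}\partial_r M=O(r^{2\alpha})\to 0$ — and, by the asymptotics $I_{\alpha-1}(z)\sim(2\pi z)^{-1/2}e^z$ as $z\to\infty$ together with $\coth(bt)\sim 1/(bt)$ and $\sinh(bt)\sim bt$ as $t\to 0^+$, behaves like the Gaussian $\frac{(r\rho)^{1/2-\alpha}}{2\sqrt{\pi t}}e^{-(r-\rho)^2/4t}$, i.e. is an approximate identity for $\rho^{2\alpha-1}d\rho$.

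\emph{Assembly, initial data, and uniqueness.} Inserting $M_t^{(b)}$ into $\mathscr K_{\alpha,k}=(2\pi)^{-k}\int_{\R^k}e^{i\langle\sigma-\sigma',\lambda\rangle}M_t^{(|\lambda|)}(r,\rho)\,d\lambda$ and performing the change of variables $\lambda=\mu/t$ (so $d\lambda=t^{-k}d\mu$ and $|\lambda|t=|\mu|$) converts $\tfrac{1}{(2\pi)^k}\int(\cdots)\tfrac{|\lambda|}{2\sinh(|\lambda|t)}(\cdots)d\lambda$ into $\tfrac{1}{\pi^k(2t)^{k+1}}\int(\cdots)\tfrac{|\mu|}{\sinh|\mu|}(\cdots)d\mu$ and yields exactly \eqref{gus10}; the $\mu$-integrand is bounded near $\mu=0$ (there the Bessel factor is $O(1)$) and decays like $|\mu|^\alpha e^{-|\mu|(\alpha+(r^2+\rho^2)/4t)}$ as $|\mu|\to\infty$ (using $I_{\alpha-1}(z)\sim\Gamma(\alpha)^{-1}(z/2)^{\alpha-1}$ at $z=0$), so $\mathscr K_{\alpha,k}$ is smooth on $\R^+\times\R^+\times\R^+$ in $(r,\rho,t)$ and Schwartz in $\sigma-\sigma'$. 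That $u$, defined by integrating $\mathscr K_{\alpha,k}$ against $\varphi$, solves $\mathfrak P_{\alpha,k}u=0$ follows by differentiating under the integral, and \eqref{neu} for $u$ follows from the same property of $M_t^{(b)}$; the initial condition $u((r,\sigma),0)=\varphi(r,\sigma)$ follows by reading $\mathscr K_{\alpha,k}$ back as $M_t^{(|\lambda|)}$ convolved in $\sigma$ and letting $t\to 0^+$, the Weinstein (Bessel) heat-kernel factor acting as an approximate identity in $r$ and the oscillator factor producing Gaussian concentration at $\rho=r$, the limit being justified by dominated convergence on $\Sigma^+$. Uniqueness in the natural class is the energy identity: for a solution $v$ with zero data, \eqref{neu} annihilates the boundary term at $r=0$ in the integration by parts and
\[
\frac{d}{dt}\int_{\R^+\times\R^k}v^2\,r^{2\alpha-1}\,dr\,d\sigma=-2\int_{\R^+\times\R^k}\Big((\partial_r v)^2+\frac{r^2}{4}|\nabla_\sigma v|^2\Big)r^{2\alpha-1}\,dr\,d\sigma\le 0,
\]
so $v\equiv 0$; equivalently, $-\mathscr L_{\alpha,k}$ is nonnegative and self-adjoint on $L^2(\R^+\times\R^k,r^{2\alpha-1}dr\,d\sigma)$ under \eqref{neu} and generates a unique contraction semigroup, necessarily the one with kernel $\mathscr K_{\alpha,k}$.

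\emph{Main obstacle.} The genuinely delicate step is the passage $t\to 0^+$ inside the oscillatory, degenerate integral \eqref{gus10} — equivalently, showing that $\mathscr K_{\alpha,k}$ is an approximate identity for the measure $\rho^{2\alpha-1}d\rho\,d\sigma'$ — and, hand in hand with it, making the partial Fourier transform fully rigorous on the non-Schwartz class $\Sigma^+$ (dealt with by the density reduction above). The spectral computation and the Hille--Hardy summation, though they are the conceptual source of \eqref{gus10}, are otherwise classical and carry no surprises.
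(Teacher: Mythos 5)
Your proposal is correct, and the final one--dimensional kernel $M_t^{(b)}$ you produce coincides (after matching Fourier conventions, $b=2\pi|\lambda|$ versus $b=|\lambda|$) with the kernel in the paper's formula \eqref{gus6}; the change of variables $\lambda=\mu/t$ and the resulting constant $\pi^{-k}(2t)^{-(k+1)}$ also check out. The difference lies entirely in how the heat kernel of the Bessel harmonic oscillator $\Barr-\tfrac{b^2}{4}r^2$ is obtained. The paper explicitly declines the spectral route you take (``This however would lead to a lengthy introduction to the relevant calculus. We instead follow an alternative route, purely PDE based.'') and instead uses a Riccati/ground-state substitution $v=e^{-h}f$ with $h=\tfrac{\omega}{2}r^2+(1+a)\omega t$ (Lemma \ref{L:ouho}) to reduce to the Ornstein--Uhlenbeck equation, then invokes the classical $m$-dimensional Mehler formula on radial data together with Bochner's spherical integral, finally setting $m=2\alpha$. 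Your route — Laguerre eigenfunctions with eigenvalues $b(\alpha+2n)$ summed by Hille--Hardy — arrives at the same closed form, and I verified your normalization constant, the eigenvalues, and the simplifications $1-e^{-2bt}=2e^{-bt}\sinh(bt)$, $\sinh(bt)+e^{-bt}=\cosh(bt)$. Each approach buys something: the paper's avoids spectral machinery and is self-contained at the PDE level, but its use of the $m$-dimensional Mehler formula and the sphere $\mathbb S^{m-1}$ is, for non-integer $m=2\alpha$, a formal continuation in the dimension that is only justified a posteriori; your Laguerre computation is valid directly for every real $\alpha>0$ and makes the reflecting condition \eqref{neu} transparent as the self-adjointness condition selecting the $L^2(r^{2\alpha-1}dr)$ Laguerre basis. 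On the remaining points (approximate identity as $t\to0^+$, uniqueness via the energy identity, the density reduction from Schwartz data to $\Sigma^+$) your treatment is a sketch, but it is no less detailed than the paper's own, which derives the formula without explicitly verifying the initial condition or uniqueness; your energy-identity argument for uniqueness is in fact an addition the paper does not supply.
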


In \eqref{gus10} we have indicated with $I_\nu(z)$ the modified Bessel function of order $\nu$. The connection between \eqref{GHgen} and the heat kernel \eqref{gus10} of $\mathfrak P_{\alpha,k}$ is made explicit in the following proposition.

\begin{proposition}\label{P:limit}
For every $(r,\sigma)\in \R^+\times \R^k$,
\begin{equation}\label{limit}
\mathscr K_{\alpha,k}((r,\sigma),(0,0),t)
   = \lim_{(\rho,\sigma')\to (0,0)}
     \mathscr K_{\alpha,k}((r,\sigma),(\rho,\sigma'),t)
   = \frac{2\pi^\alpha}{\Gamma(\alpha)}\, G^\star_{\alpha,\alpha+k}((r,\sigma),t),
\end{equation}
where
\begin{align}\label{gusrad}
G^\star_{\alpha,\beta}((r,\sigma),t)
  := \frac{2^k}{(4\pi t)^\beta}
    \int_{\R^k} e^{-\frac{i}{t}\langle\sigma,\lambda\rangle}
      \left(\frac{|\lambda|}{\sinh |\lambda|}\right)^\alpha
      e^{- \frac{r^2}{4t}\frac{|\lambda|}{\tanh |\lambda|}}
      \, d\lambda,
      \qquad \beta>\alpha>0.
\end{align}
\end{proposition}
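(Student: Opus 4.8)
The plan is to pass to the limit under the integral sign in \eqref{gus10} by dominated convergence, after isolating the small-argument behaviour of the modified Bessel function. Recall that
\[
I_\nu(w) = \left(\frac w2\right)^{\nu}\Phi_\nu(w), \qquad \Phi_\nu(w) := \sum_{n\ge 0}\frac{(w/2)^{2n}}{n!\,\Gamma(n+\nu+1)},
\]
where $\Phi_\nu$ is entire and $\Phi_\nu(0)=1/\Gamma(\nu+1)$; in particular, for $\nu=\alpha-1$ one has $\Phi_{\alpha-1}(0)=1/\Gamma(\alpha)$, and $\Phi_{\alpha-1}$ is continuous and bounded on every bounded interval. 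Setting $w=w(\lambda):=\frac{|\lambda|\rho r}{2t\sinh|\lambda|}$, the key algebraic observation is that the prefactor $(r\rho)^{1-\alpha}$ in \eqref{gus10} exactly cancels the power $(w/2)^{\alpha-1}$ produced by $I_{\alpha-1}(w)$:
\[
(r\rho)^{1-\alpha}\left(\frac{w}{2}\right)^{\alpha-1}
= (r\rho)^{1-\alpha}\left(\frac{|\lambda|\,\rho r}{4t\sinh|\lambda|}\right)^{\alpha-1}
= \left(\frac{|\lambda|}{4t\sinh|\lambda|}\right)^{\alpha-1},
\]
which depends on neither $r$ nor $\rho$. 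Hence the integrand in \eqref{gus10} equals
\[
e^{-\frac{i}{t}\langle\sigma'-\sigma,\lambda\rangle}\,\frac{|\lambda|}{\sinh|\lambda|}\,e^{-\frac{|\lambda|}{\tanh|\lambda|}\frac{r^2+\rho^2}{4t}}\left(\frac{|\lambda|}{4t\sinh|\lambda|}\right)^{\alpha-1}\Phi_{\alpha-1}\!\left(w(\lambda)\right).
\]

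Next I would produce an integrable majorant, uniform for $(\rho,\sigma')$ in a bounded neighbourhood of $(0,0)$, say $0<\rho\le 1$. Since $0<\frac{|\lambda|}{\sinh|\lambda|}\le 1$ for all $\lambda$, we have $0\le w(\lambda)\le \frac{\rho r}{2t}\le \frac{r}{2t}$, so by continuity $C_{\alpha,r,t}:=\sup_{[0,\,r/2t]}\Phi_{\alpha-1}<\infty$; moreover $\bigl|e^{-\frac{i}{t}\langle\sigma'-\sigma,\lambda\rangle}\bigr|=1$, $\frac{|\lambda|}{\tanh|\lambda|}>0$, so $e^{-\frac{|\lambda|}{\tanh|\lambda|}\frac{r^2+\rho^2}{4t}}\le e^{-\frac{|\lambda|}{\tanh|\lambda|}\frac{r^2}{4t}}$. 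Absorbing the factor $\frac{|\lambda|}{\sinh|\lambda|}$ into $\left(\frac{|\lambda|}{\sinh|\lambda|}\right)^{\alpha-1}$, the modulus of the integrand is bounded above by
\[
(4t)^{1-\alpha}\,C_{\alpha,r,t}\left(\frac{|\lambda|}{\sinh|\lambda|}\right)^{\alpha}e^{-\frac{r^2}{4t}\frac{|\lambda|}{\tanh|\lambda|}},
\]
which is independent of $(\rho,\sigma')$ and lies in $L^1(\R^k)$ for every $\alpha>0$, because $\frac{|\lambda|}{\sinh|\lambda|}$ is bounded near $\lambda=0$ and decays exponentially at infinity. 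On the other hand, for each $\lambda$, as $(\rho,\sigma')\to(0,0)$ we have $e^{-\frac{i}{t}\langle\sigma'-\sigma,\lambda\rangle}\to e^{\frac{i}{t}\langle\sigma,\lambda\rangle}$, $e^{-\frac{|\lambda|}{\tanh|\lambda|}\frac{r^2+\rho^2}{4t}}\to e^{-\frac{|\lambda|}{\tanh|\lambda|}\frac{r^2}{4t}}$, and $\Phi_{\alpha-1}(w(\lambda))\to \Phi_{\alpha-1}(0)=1/\Gamma(\alpha)$, so the integrand converges pointwise to
\[
\frac{1}{\Gamma(\alpha)(4t)^{\alpha-1}}\,e^{\frac{i}{t}\langle\sigma,\lambda\rangle}\left(\frac{|\lambda|}{\sinh|\lambda|}\right)^{\alpha}e^{-\frac{|\lambda|}{\tanh|\lambda|}\frac{r^2}{4t}}.
\]

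Finally I would invoke the dominated convergence theorem and collect the constants, the first equality in \eqref{limit} being merely the definition of $\mathscr K_{\alpha,k}((r,\sigma),(0,0),t)$ as this limit. Since $\lambda\mapsto\left(\frac{|\lambda|}{\sinh|\lambda|}\right)^{\alpha}e^{-\frac{r^2}{4t}\frac{|\lambda|}{\tanh|\lambda|}}$ is radial, the change of variable $\lambda\mapsto-\lambda$ turns $e^{\frac{i}{t}\langle\sigma,\lambda\rangle}$ into $e^{-\frac{i}{t}\langle\sigma,\lambda\rangle}$, whence the limiting integral equals $\frac{(4\pi t)^{\alpha+k}}{2^k}\,G^\star_{\alpha,\alpha+k}((r,\sigma),t)$ by \eqref{gusrad}. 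Multiplying by the outer prefactor $\frac{1}{\pi^k(2t)^{k+1}}$ of \eqref{gus10} and by $\frac{1}{\Gamma(\alpha)(4t)^{\alpha-1}}$ and simplifying the powers of $2$, $\pi$ and $t$ produces exactly the constant $\frac{2\pi^\alpha}{\Gamma(\alpha)}$, which is \eqref{limit}. The only genuinely delicate point is the uniform domination: making the cancellation of $(r\rho)^{1-\alpha}$ against the Bessel prefactor explicit and controlling $\Phi_{\alpha-1}$ on a fixed compact interval; once that is in place, the remainder is the bookkeeping of constants just indicated.
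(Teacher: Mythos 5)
Your argument is correct and follows essentially the same route as the paper: dominated convergence applied to the integrand of \eqref{gus10}, with the small-argument factorization $I_{\alpha-1}(w)=(w/2)^{\alpha-1}\Phi_{\alpha-1}(w)$ cancelling the prefactor $(r\rho)^{1-\alpha}$, a majorant obtained by bounding the entire factor on a compact interval (the paper phrases this as $I_\nu(s)\le C_\nu s^\nu$ for $0\le s\le 1$ after choosing $\rho_0$ with $\rho_0 r/(2t)\le 1$), and the same bookkeeping of constants. Your explicit $\lambda\mapsto-\lambda$ step to reconcile the sign of the oscillatory factor with \eqref{gusrad} is a small point the paper leaves implicit, but it does not change the argument.
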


Since from \eqref{GHgen}, \eqref{gusrad}, we see that $G_{\alpha,\beta}((z,\sigma),t)=G^\star_{\alpha,\beta}((|z|,\sigma),t)$, the limit relation \eqref{limit} indicates that the parameter choice $\beta=\alpha+k$ is forced by the homogeneity of degree $2$ of the operator $\mathfrak P_{\alpha,k}$. 
In fact, this seemingly natural dimensional restriction conceals a deeper analytic structure, intimately related to the Gegenbauer's identity \eqref{ohoh}, Kummer’s transformation formula \eqref{hyperG} for the Gauss hypergeometric function, and to the Bateman's identity \eqref{hyperGint}. From a geometric perspective, this hidden phenomenon manifests itself in the uniqueness of the parameter $\beta=\alpha+k$, which is the sole value for which the following conformal theorem can possibly hold.

\begin{theorem}\label{T:gen}
For every $\alpha>0$, and with $N(z,\sigma)$ as in \eqref{N}, we have
\begin{equation}\label{gen}
\mathscr E_{\alpha,\alpha+k}(z,\sigma)
   := \int_0^\infty G_{\alpha,\alpha+k}((z,\sigma),t)\, dt
   = \frac{2^{\alpha+2k-4}\Gamma(\frac{\alpha}{2})
      \Gamma(\frac{\alpha+k-1}{2})}
      {\pi^{\frac{2\alpha+k+1}{2}}}
     N(z,\sigma)^{-(2\alpha+2k-2)}.
\end{equation}
\end{theorem}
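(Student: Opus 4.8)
The plan is to collapse the $t$-integral in \eqref{gen}, reducing everything to a single integral over $\R^k$, and then to evaluate that integral by classical identities for Bessel and hypergeometric functions. Writing $\mu=|\lambda|$, one first checks that for $z\neq 0$ the order of integration in $\int_0^\infty G_{\alpha,\alpha+k}((z,\sigma),t)\,dt$ may be exchanged (Tonelli: the $t$-integral of the modulus is $(4\pi)^{-(\alpha+k)}\Gamma(\alpha+k-1)\big(\tfrac{|z|^2}{4}\mu\coth\mu\big)^{-(\alpha+k-1)}$, which is integrable against $(\mu/\sinh\mu)^{\alpha}$ on $\R^k$; the case $z=0$ is then recovered by continuity). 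The substitution $t\mapsto 1/t$ turns the $t$-integral into $\int_0^\infty t^{-(\alpha+k)}e^{-A/t}\,dt=\Gamma(\alpha+k-1)\,A^{-(\alpha+k-1)}$ with $A=\tfrac{|z|^2}{4}\mu\coth\mu+i\langle\sigma,\lambda\rangle$ (legitimate since $\operatorname{Re}A>0$ and $\alpha+k>1$), so that
\[
\mathscr E_{\alpha,\alpha+k}(z,\sigma)=\frac{2^k\,\Gamma(\alpha+k-1)}{(4\pi)^{\alpha+k}}\int_{\R^k}\Big(\frac{\mu}{\sinh\mu}\Big)^{\alpha}\Big(\frac{|z|^2}{4}\mu\coth\mu+i\langle\sigma,\lambda\rangle\Big)^{-(\alpha+k-1)}d\lambda .
\]

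The heart of the matter is the evaluation of this $\lambda$-integral, and this is exactly where the hidden structure announced before the statement comes into play. Passing to polar coordinates $\lambda=\mu\omega$, the various powers of $\mu$ conspire — the Jacobian $\mu^{k-1}$, the factor $\mu^{-(\alpha+k-1)}$ extracted from the second bracket, and $(\mu/\sinh\mu)^{\alpha}$ — to leave simply $(\sinh\mu)^{-\alpha}$, reducing the integral to
\[
\int_0^\infty(\sinh\mu)^{-\alpha}\Big[\int_{S^{k-1}}\Big(\tfrac{|z|^2}{4}\coth\mu+i\langle\sigma,\omega\rangle\Big)^{-(\alpha+k-1)}d\omega\Big]\,d\mu .
\]
I would compute the inner spherical average from the Poisson integral representation of the Bessel function $J_{(k-2)/2}$ together with the Laplace-transform formula for $s^{\delta-1}e^{-as}J_\nu(bs)$, which expresses it through a Gauss hypergeometric function of argument $-16|\sigma|^2\tanh^2\mu/|z|^4$; this is the content of Gegenbauer's identity \eqref{ohoh}. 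The remaining integral in $\mu$ is then cast, by the substitution $v=\tanh^2\mu$, as an Eulerian integral of a ${}_2F_1$, and Bateman's identity \eqref{hyperGint} — applied after a Kummer/Euler transformation \eqref{hyperG} of the hypergeometric function — evaluates it in closed form. The outcome collapses, as it must by homogeneity, to a pure power of $|z|^4+16|\sigma|^2$, namely $N(z,\sigma)^{-(2\alpha+2k-2)}$.

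It remains to pin down the constant. Assembling the Gamma factors produced along the way, the Legendre duplication formula $\Gamma(\alpha+k-1)=\tfrac{2^{\alpha+k-2}}{\sqrt\pi}\,\Gamma\big(\tfrac{\alpha+k-1}{2}\big)\Gamma\big(\tfrac{\alpha+k}{2}\big)$ is precisely what converts the $\Gamma\big(\tfrac{\alpha+k}{2}\big)$ that appears naturally into the $\Gamma\big(\tfrac{\alpha+k-1}{2}\big)$ of \eqref{gen}. A transparent independent check of the constant is available on the slice $\sigma=0$: there the second bracket is $\big(\tfrac{|z|^2}{4}\mu\coth\mu\big)^{-(\alpha+k-1)}$, and the substitution $u=\sinh^2\mu$ turns the $\lambda$-integral into $\tfrac12 B\big(\tfrac k2,\tfrac\alpha2\big)\,|S^{k-1}|\,(4/|z|^2)^{\alpha+k-1}=\tfrac{\pi^{k/2}\Gamma(\alpha/2)}{\Gamma((\alpha+k)/2)}(4/|z|^2)^{\alpha+k-1}$; since $N(z,0)=|z|$, this agrees with the right-hand side of \eqref{gen} exactly, once more through the duplication formula.

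The only genuine obstacle is the middle step: reducing the spherical average to the correct hypergeometric function, selecting the substitution that brings the $\mu$-integral within the scope of Bateman's identity, and tracking every constant so that the $\Gamma$'s assemble into the precise form \eqref{gen}; the Fubini justification, the $t$-integral, and the homogeneity bookkeeping are routine. As a structural cross-check — which in fact yields an alternative proof — note that by Theorem \ref{T:gengus} and Proposition \ref{P:limit} the left-hand side of \eqref{gen} is, up to an explicit constant, the fundamental solution with pole at the origin of $\mathscr L_{\alpha,k}$ (the defining time-integral converges because $G^\star_{\alpha,\alpha+k}=O(t^{-(\alpha+k)})$ as $t\to\infty$, with $\alpha+k>1$, and decays faster than any power as $t\to 0^+$ away from the pole). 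Hence $N(z,\sigma)^{-(2\alpha+2k-2)}$ must be annihilated by $\mathscr L_{\alpha,k}$ away from the origin — the fractal analogue of the $\mathscr L_{\alpha,k}$-harmonicity of the Kor\'anyi-Folland gauge, which can also be verified by a direct differentiation — and, being in addition homogeneous of the correct degree $-(2\alpha+2k-2)$ and locally integrable for the weight $r^{2\alpha-1}dr\,d\sigma$, by the standard homogeneity argument it is itself a fundamental solution; comparison with the $\sigma=0$ computation above fixes the scalar and gives \eqref{gen}.
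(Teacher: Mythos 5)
Your proposal is correct and, at its computational core, follows the same route as the paper: the same three special-function facts---Gegenbauer's formula \eqref{ohoh}, the Kummer transformation \eqref{hyperG}, and Bateman's integral \eqref{hyperGint}, followed by the collapse of $F(a,b;b;\cdot)$ to a pure power---and the same crucial cancellation of all powers of $|\lambda|$ forced by $\beta=\alpha+k$. The only structural difference is the order of operations: the paper first applies Bochner's formula to the $\lambda$-integral and then evaluates the $t$-integral via \eqref{ohoh}, whereas you integrate in $t$ first, obtain the complex power $\big(\tfrac{|z|^2}{4}\mu\coth\mu+i\langle\sigma,\lambda\rangle\big)^{-(\alpha+k-1)}$, and then reinstate a Laplace-transform subordination to compute the spherical average, which lands on the very same instance of \eqref{ohoh}; your independent confirmation of the constant on the slice $\sigma=0$ (Beta integral plus Legendre duplication) is a worthwhile addition and does match \eqref{gen}. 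One caveat on the closing ``structural cross-check'': as phrased, the deduction that $N(z,\sigma)^{-(2\alpha+2k-2)}$ is annihilated by $\mathscr L_{\alpha,k}$ presupposes \eqref{gen}, so to promote that paragraph to an actual alternative proof you would need to verify the harmonicity of the gauge power by direct differentiation (as you parenthetically note) and also show that the distributional $\mathscr L_{\alpha,k}$ of this homogeneous, locally integrable function is a \emph{nonzero} multiple of the Dirac mass.
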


Comparing \eqref{gen} with \eqref{meh} shows that $m_\alpha =2\alpha$ plays the role of a fractal dimension associated with the variable $r\in \R^+$.

The plan of the paper is as follows. In Section \ref{S:meh} we prove Theorem \ref{T:gengus} and Proposition \ref{P:limit}. Section \ref{S:gen} is devoted to the proof of Theorem \ref{T:gen}. In the final Section \ref{S:np} we highlight an interesting link of our results with the geometric fully nonlinear equation \eqref{fullyH}, and we formulate two questions which we feel are of interest in connection with geometric flows in sub-Riemannian geometry.




\section{Proof of Theorem \ref{T:gengus}}\label{S:meh}

In this section we prove Theorem \ref{T:gengus}. Given $\alpha >0$ and $k\in \mathbb N$, , we consider the operator
\begin{equation}\label{Lablap}
\mathscr L_{\alpha,k} = \p_{rr} + \frac{2\alpha-1}{r} \p_r + \frac{r^2}{4}\Delta_\sigma,
\end{equation}
where $r> 0$ and $\sigma\in \R^k$.
Henceforth, we denote by
\begin{equation}\label{Ba}
\Ba u = \p_{xx} u + \frac a{x} \p_x u = x^{-a} \p_x(x^a \p_x)
\end{equation}
the Bessel operator on the half-line $\R^+$. As it is well-known, with its associated invariant measure $d\omega_a(x) = x^a dx$, such operator
plays a central role in analysis and geometry, particularly in the study of partial differential equations and stochastic processes in which symmetries are involved, see \cite{We1, We, We2, MS, Ta, MO, IM, KT}. Using \eqref{Ba}, we can write \eqref{Lablap} as follows
\begin{equation}\label{Lab2}
\mathscr L_{\alpha,k} = \Barr + \frac{r^2}{4}  \Delta_\sigma.
\end{equation} 
Since \eqref{Lab2} is translation-invariant in $\sigma\in \R^k$, to solve the Cauchy problem
\begin{equation}\label{cpab}
\p_t u - \mathscr L_{\alpha,k} u = 0,\ \ \ \ \ u((r,\sigma),0) = \vf(r,\sigma),
\end{equation}
we apply a partial Fourier transform with respect to this variable,
\[
\hat u((r,\la),t) = \int_{\R^k} e^{-2\pi i\sa\la,\sigma\da} u((r,\sigma),t)d\sigma.
\]
If for any fixed $\la\in\R^k\setminus\{0\}$, we let
\[
v(r,t) = \hat u((r,\la),t),
\]
then \eqref{cpab} is converted into a Cauchy problem for the harmonic oscillator associated with the Bessel process $\Barr$,
\begin{equation}\label{cpak}
\p_t v - \Barr v + \pi^2|\la|^2 r^2 v = 0,\ \ \ \ \ v(r,0) = \hat \vf(r,\la).
\end{equation}  
Since we are interested in reflected Brownian motion, we also impose the Neumann condition \eqref{neu}. The heat kernel for \eqref{cpak} can be obtained using the spectral analysis of the operator $\Barr v - \pi^2|\la|^2 r^2$. This however would lead to a lengthy introduction to the relevant calculus. We instead follow an alternative route, purely PDE based. We will need the following well-known fact; see \cite{OU, BGV, SS, Bo2, Wil}.

\begin{proposition}\label{P:ou} The solution of the Cauchy problem for the Ornstein-Uhlenbeck operator in $\Rm\times (0,\infty)$ 
\begin{equation}\label{cpou}
\begin{cases}
u_t - \Delta u + 2 \omega \langle x,\nabla u\rangle  = 0,\ \ \ \ \ \omega>0,
\\
u(x,0) = \psi(x),
\end{cases}
\end{equation}
is given by the classical Mehler formula
\begin{align}\label{gustavo}
u(x,t) & = (4\pi)^{- \frac m2} e^{m t \omega} \left(\frac{2\omega}{\sinh(2t \omega)}\right)^{\frac m2}
\\
& \times \int_{\Rm} \exp\left( -  \frac{\omega}{2 \sinh(2t \omega)} |e^{t \omega} y - e^{-t \omega} x|^2\right) \psi(y) dy.
\notag
\end{align}
\end{proposition}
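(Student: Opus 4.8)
The plan is to obtain \eqref{gustavo} by reducing the Ornstein--Uhlenbeck Cauchy problem \eqref{cpou} to the classical heat equation, via the change of variables that underlies Mehler's original computation, and then reading off the kernel from the Gauss--Weierstrass kernel. This route is shorter than a spectral analysis of the associated harmonic oscillator and more transparent than a brute-force verification.

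Concretely, I would seek the solution in the form $u(x,t)=w(\xi,\tau)$ with $\xi=e^{-2\omega t}x$ and $\tau=\tau(t)$ a smooth, increasing reparametrization of time with $\tau(0)=0$, to be determined. The chain rule gives $\nabla_x u = e^{-2\omega t}\nabla_\xi w$, $\Delta_x u = e^{-4\omega t}\Delta_\xi w$, and $\partial_t u = \tau'(t)\,\partial_\tau w - 2\omega e^{-2\omega t}\langle x,\nabla_\xi w\rangle$. Substituting into \eqref{cpou}, the first-order contribution $-2\omega e^{-2\omega t}\langle x,\nabla_\xi w\rangle$ is cancelled exactly by the drift term $2\omega\langle x,\nabla_x u\rangle = 2\omega e^{-2\omega t}\langle x,\nabla_\xi w\rangle$, and one is left with
\[
u_t-\Delta u+2\omega\langle x,\nabla u\rangle \;=\; \tau'(t)\,\partial_\tau w - e^{-4\omega t}\Delta_\xi w .
\]
Hence the choice $\tau'(t)=e^{-4\omega t}$, i.e. $\tau(t)=\dfrac{1-e^{-4\omega t}}{4\omega}$, turns \eqref{cpou} into the heat equation $\partial_\tau w=\Delta_\xi w$ with $w(\cdot,0)=\psi$, since $(\xi,\tau)\to(x,0)$ as $t\to0^+$.

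It then remains to insert the Weierstrass representation $w(\xi,\tau)=(4\pi\tau)^{-m/2}\int_{\mathbb R^m} e^{-|\xi-y|^2/(4\tau)}\psi(y)\,dy$ and simplify. Writing $1-e^{-4\omega t}=2e^{-2\omega t}\sinh(2\omega t)$ gives $4\tau(t)=\dfrac{2e^{-2\omega t}\sinh(2\omega t)}{\omega}$; then the identity $e^{2\omega t}\,|e^{-2\omega t}x-y|^2=|e^{\omega t}y-e^{-\omega t}x|^2$ converts the Gaussian exponent into $-\dfrac{\omega}{2\sinh(2\omega t)}|e^{\omega t}y-e^{-\omega t}x|^2$, while the normalization $(4\pi\tau(t))^{-m/2}$ becomes $(4\pi)^{-m/2}e^{m\omega t}\big(\tfrac{2\omega}{\sinh(2\omega t)}\big)^{m/2}$. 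This is precisely \eqref{gustavo}.

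No step here is genuinely hard; the two things to watch are the trigonometric bookkeeping just described, which is routine, and the regularity and uniqueness issues. For the latter, when $\psi$ lies in a Schwartz-type class the heat solution $w$ is $C^\infty$ on $\mathbb R^m\times[0,\infty)$ with Gaussian control of all derivatives, the map $t\mapsto(\xi,\tau(t))$ is a smooth time-reparametrization for $t>0$ with $\tau(t)>0$, so $u$ is a classical solution of \eqref{cpou} attaining $\psi$ as $t\to0^+$; differentiation under the integral sign is justified by the Gaussian decay. Uniqueness in the relevant class transfers back from the standard uniqueness theorem for the heat equation through the invertible change of variables (equivalently, it follows from a Tychonoff/maximum-principle argument for the Ornstein--Uhlenbeck operator). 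If one prefers a fully self-contained argument, \eqref{gustavo} may instead be verified directly: differentiation under the integral shows it solves \eqref{cpou}, and a standard approximate-identity estimate gives $u(\cdot,t)\to\psi$ as $t\to0^+$; equivalently, \eqref{gustavo} is nothing but the transition density of the Ornstein--Uhlenbeck process $dX_s=-2\omega X_s\,ds+\sqrt2\,dB_s$, a Gaussian with mean $e^{-2\omega t}x$ and covariance $\tfrac{1-e^{-4\omega t}}{2\omega}\,\mathrm{Id}$.
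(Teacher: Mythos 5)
Your proposal is correct, and the computations all check out: the change of variables $\xi=e^{-2\omega t}x$, $\tau(t)=\frac{1-e^{-4\omega t}}{4\omega}$ does reduce \eqref{cpou} to the heat equation (the drift term cancels exactly against the $\partial_t\xi$ contribution), the identity $1-e^{-4\omega t}=2e^{-2\omega t}\sinh(2\omega t)$ together with $e^{2\omega t}|e^{-2\omega t}x-y|^2=|e^{\omega t}y-e^{-\omega t}x|^2$ turns the Gauss--Weierstrass kernel into precisely the exponent and prefactor of \eqref{gustavo}, and your probabilistic cross-check (Gaussian transition density with mean $e^{-2\omega t}x$ and covariance $\frac{1-e^{-4\omega t}}{2\omega}\,\mathrm{Id}$) is consistent. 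Note, however, that the paper does not prove Proposition \ref{P:ou} at all: it is quoted as a well-known fact with references to the literature on Ornstein--Uhlenbeck semigroups and Mehler's formula. So there is no ``paper proof'' to compare against; your argument is a legitimate self-contained derivation that could stand in for the citation. The one point worth being slightly more careful about, if this were to be included, is the function class: the paper applies \eqref{gustavo} to data of the form $e^{\omega|y|^2/2}\hat\varphi(y,\lambda)$, which grow like a Gaussian, so the ``Schwartz-type class'' framing of your regularity/uniqueness discussion is a bit too restrictive --- one should check that $4\tau(t)<\omega^{-1}$ for all $t>0$, so the Weierstrass integral still converges against $e^{\omega|y|^2/2}$ growth, and invoke uniqueness in a Tychonoff class adapted to that growth.
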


To solve \eqref{cpak}, \eqref{neu}, we convert such problem into one for the Ornstein-Uhlenbeck operator by means of the following.

\begin{lemma}\label{L:ouho}
Let $\Phi\in C(\R^{m+1})$ and $h\in C^2(\R^{m+1})$ be connected by the following  nonlinear equation
\begin{equation}\label{riccati}
h_t - \Barr h + (\p_r h)^2  = \Phi.
\end{equation}
Then $v$ solves the partial differential equation
\begin{equation}\label{pdeho}
v_t -\Barr v + \Phi v  = 0
\end{equation}
if and only if $f$ defined by the transformation
\begin{equation}\label{genexp}
v(r,t) = e^{-h(r,t)} f(r,t),
\end{equation} 
solves the equation
\begin{equation}\label{PDEou}
f_t - \Barr f + 2 \p_r h \p_r  f  = 0.
\end{equation}
\end{lemma}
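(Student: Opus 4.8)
The plan is to verify the equivalence by a direct computation: substitute the transformation \eqref{genexp} into \eqref{pdeho}, expand everything with the product rule, and observe that the hypothesis \eqref{riccati} is precisely what is needed to annihilate the resulting zeroth-order coefficient. This is the Bessel-weighted, one-dimensional analogue of the classical ``ground state substitution'', i.e.\ conjugation of a Schr\"odinger-type operator by a positive multiplier $e^{-h}$; the role of \eqref{riccati} is to convert the potential term $\Phi v$ into the first-order drift $2\,\p_r h\,\p_r f$.

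First I would record the derivatives of $v = e^{-h}f$. Since $\Barr = \p_{rr} + \frac{2\alpha-1}{r}\p_r$ carries no zeroth-order term, the product rule gives
\[
v_t = e^{-h}\big(f_t - h_t f\big), \qquad \p_r v = e^{-h}\big(\p_r f - \p_r h\, f\big),
\]
and, iterating the second identity together with the first-order part,
\[
\Barr v = e^{-h}\Big(\Barr f - 2\,\p_r h\,\p_r f - (\Barr h)\,f + (\p_r h)^2 f\Big).
\]
Assembling $v_t - \Barr v + \Phi v$ and factoring out the nowhere-vanishing $e^{-h}$, the coefficient of $f$ is
\[
-h_t + \Barr h - (\p_r h)^2 + \Phi = -\big(h_t - \Barr h + (\p_r h)^2\big) + \Phi = 0
\]
by \eqref{riccati}, while what remains is exactly $f_t - \Barr f + 2\,\p_r h\,\p_r f$. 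Hence
\[
v_t - \Barr v + \Phi v = e^{-h}\big(f_t - \Barr f + 2\,\p_r h\,\p_r f\big),
\]
and since $e^{-h}>0$ everywhere, the left-hand side vanishes if and only if the bracket on the right does, which is precisely \eqref{PDEou}. This proves both implications at once.

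I do not expect any genuine obstacle here: the computation is elementary, and the only thing to check is the collapse of the $f$-coefficient, which is immediate from \eqref{riccati}. The points worth flagging are merely bookkeeping: one uses $h\in C^2$ (as assumed) and enough regularity of $f$ (equivalently of $v$) for the pointwise identities above to be legitimate, and $\Phi\in C$ keeps \eqref{pdeho} and \eqref{PDEou} classically meaningful. In the application to \eqref{cpak} one takes $\Phi(r)=\pi^2|\la|^2 r^2$ and seeks $h$ of the form (quadratic in $r$) plus (a function of $t$) solving \eqref{riccati}, after which \eqref{PDEou} becomes a Bessel Ornstein--Uhlenbeck equation amenable to Proposition \ref{P:ou}.
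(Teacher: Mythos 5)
Your computation is correct and is essentially identical to the paper's own proof: both expand $v=e^{-h}f$ under $\p_t$ and $\Barr$, observe that the zeroth-order coefficient collapses to zero by \eqref{riccati}, and conclude from the identity $v_t-\Barr v+\Phi v=e^{-h}\bigl(f_t-\Barr f+2\,\p_r h\,\p_r f\bigr)$ together with the positivity of $e^{-h}$. Nothing further is needed.
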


\begin{proof}
From \eqref{genexp} we have
\[
v_t = - f e^{-h} h_t + e^{-h} f_t,
\]
and
\begin{align*}
\Barr v & = f \Barr(e^{-h}) + e^{-h} \Barr f + 2 \p_r (e^{-h}) \p_r f
\\
& = e^{-h} f (\p_r h)^2 - e^{-h} f \Barr h + e^{-h} \Barr f  - 2 e^{-h} \p_r h \p_r f,
\end{align*}
for the function $v$ we have
\[
\p_t v - \Barr v + v[\p_t h - \Barr h + (\p_r h)^2] = e^{-h}[\p_t f - \Barr f + 2 \p_r h \p_r f].
\]
The desired conclusion immediately follows from this identity.

\end{proof}

For the harmonic oscillator PDE
\[
\p_t v - \Barr v + \pi^2|\la|^2 r^2 v = 0,
\]
it is clear that we must take $\Phi(r,t) =  \omega^2 r^2$, with $\omega = \pi |\la|>0$. With this right-hand side,  we try the ansatz $h(r,t) = \frac{A}{2} r^2 + C t$. Such $h$ solves \eqref{riccati} if and only if
$A = \omega$, and  $C = (1+a)\omega$. This gives $h(r,t) = \frac{\omega}{2} r^2 + (1+a)\omega t$. From Lemma \ref{L:ouho} we infer that for every $\la\in \R^k\setminus\{0\}$ the function $f(r,t) = e^{\frac{\pi |\la|}{2} r^2 + (1+a)\pi |\la| t} v(r,t)$ solves the problem
\begin{equation}\label{cpou}
f_t - \Barr f + 2 \omega r \p_r  f  = 0,\ \ \ \ \ f(r,0) = e^{\frac{\omega}{2} r^2} \hat \vf(r,\la),\ \ \ \ \underset{r\to 0^+}{\lim} r^{2\alpha-1} \p_r f(r,t) = 0.
\end{equation}
If $\psi(y) = \Psi(|y|)$, we obtain from \eqref{gustavo}
\begin{align}\label{gustavo2}
u(x,t) & = (4\pi)^{- \frac m2} e^{m t  \omega} \left(\frac{2 \omega}{\sinh(2t \omega)}\right)^{\frac m2}
\times \int_0^\infty \Psi(\rho) \exp\left(- \frac{\omega}{2 \sinh(2t\omega)} [e^{2t\omega} \rho^2 + e^{-2 t \omega} |x|^2]\right)
\\
& \times  \int_{\mathbb S^{m-1}}  e^{ \frac{\omega \rho |x|}{\sinh(2t\omega)}\sa \frac{x}{|x|},y\da} d\sigma(y) \rho^{m-1} d\rho.
\notag
\end{align}
Using Bochner's argument (\cite[Theor. 40 on p. 69]{BC}), and the Poisson  representation of the modified Bessel function $I_\nu$, one has for $z>0$ and any $\xi\in \mathbb S^{m-1}$
\[
\int_{\mathbb S^{m-1}} \exp \left\{z\sa\xi,y\da\right\} d\sigma(y) =  (2\pi)^{\frac m2} z^{1-\frac m2} I_{\frac m2 - 1}(z),
\]
see e.g. p.2 in \cite{Gcontmat}.
Applying this formula with $z = \frac{\omega \rho |x|}{\sinh(2t\omega)}$, we find
\[
\int_{\mathbb S^{m-1}}  e^{ \frac{\omega \rho |x|}{\sinh(2t\omega)}\sa \frac{x}{|x|},y\da} d\sigma(y) = 
(2\pi)^{\frac m2} \left(\frac{\omega \rho |x|}{\sinh(2t\omega)}\right)^{1-\frac m2} I_{\frac m2 - 1}(\frac{\omega \rho |x|}{\sinh(2t\omega)}).
\]
Substituting this identity in \eqref{gustavo2}, we obtain
\begin{align}\label{gus3}
u(x,t) & =  e^{m t  \omega} \left(\frac{\omega}{\sinh(2t \omega)}\right)^{\frac m2} \left(\frac{\omega\rho |x|}{\sinh(2t\omega)}\right)^{1-\frac m2} I_{\frac m2 - 1}(\frac{\omega \rho|x|}{\sinh(2t\omega)})
\\
& \times \exp\left(- \frac{\omega e^{-2 t \omega} |x|^2}{2 \sinh(2t\omega)}\right) \int_0^\infty \Psi(\rho) \exp\left(- \frac{\omega e^{2t\omega} \rho^2}{2 \sinh(2t\omega)}\right)
  \rho^{m-1} d\rho.
\notag
\end{align}
If we now keep in mind that $m=2\alpha$, $r = |x|$, $\omega = \pi |\la|$, and $\Psi(\rho) = e^{\frac{\omega}{2} \rho^2} \hat \vf(\rho,\la)$, from \eqref{gus3} we obtain  for the solution of \eqref{cpou}
\begin{align}\label{gus4}
f(r,t) & =  e^{2\alpha t \pi |\la|} \left(\frac{\pi |\la|}{\sinh(2t \pi |\la|)}\right)^{\alpha} \left(\frac{\pi |\la| \rho r}{\sinh(2t\pi |\la|)}\right)^{1-\alpha} I_{\alpha - 1}(\frac{\pi |\la| \rho r}{\sinh(2t\pi |\la|)})
\\
& \times \exp\left(- \frac{\pi |\la| e^{-2 t \pi |\la|} r^2}{2 \sinh(2t\pi |\la|)}\right) \int_0^\infty e^{\frac{\pi |\la|}{2} \rho^2} \hat \vf(\rho,\la) \exp\left(- \frac{\pi |\la| e^{2t\pi |\la|} \rho^2}{2 \sinh(2t\pi |\la|)}\right)
  \rho^{2\alpha-1} d\rho
\notag\\
& =  e^{2\alpha t \pi |\la|}r^{1-\alpha} \frac{\pi |\la|}{\sinh(2t \pi |\la|)}   \exp\left(- \frac{2\pi |\la| e^{-2 t \pi |\la|} r^2}{4 \sinh(2t\pi |\la|)}\right) 
\notag
\\
& \times \int_0^\infty e^{\frac{\pi |\la|}{2} \rho^2} \hat \vf(\rho,\la) \exp\left(- \frac{2\pi |\la| e^{2t\pi |\la|} \rho^2}{4 \sinh(2t\pi |\la|)}\right) I_{\alpha - 1}(\frac{\pi |\la|\rho r}{\sinh(2t\pi |\la|)})
  \rho^{\alpha} d\rho.
\notag
\end{align}

Going back to \eqref{cpak}, keeping in mind that the connection between $v$ and $f$ is given by 
\[
f(r,t) = e^{\frac{\pi |\la|}{2} r^2 + 2\alpha \pi |\la| t} v(r,t), 
\]
and that $v(r,t) = \hat u((r,\la),t)$,
we obtain from \eqref{gus4}
\begin{align}\label{gus5}
\hat u((r,\la),t) & =   e^{-\frac{2\pi |\la|}{4} r^2} r^{1-\alpha} \frac{\pi |\la|}{\sinh(2t \pi |\la|)}  \exp\left(- \frac{2\pi |\la| e^{-2 t \pi |\la|} r^2}{4 \sinh(2t\pi |\la|)}\right) 
\\
& \times \int_0^\infty e^{\frac{2\pi |\la|}{4} \rho^2} \hat \vf(\rho,\la) \exp\left(- \frac{2\pi |\la| e^{2t\pi |\la|} \rho^2}{4 \sinh(2t\pi |\la|)}\right) I_{\alpha - 1}(\frac{\pi |\la| \rho r}{\sinh(2t\pi |\la|)}) 
  \rho^{\alpha} d\rho.
\notag
\end{align}
Using the identity
\[
1+ \frac{e^{-2t\pi |\la|}}{\sinh(2t\pi |\la|)} = \frac{1}{\tanh(2t\pi |\la|)},
\]
we can rewrite \eqref{gus5} in the following more compact form
\begin{align}\label{gus6}
\hat u((r,\la),t) & =   e^{-\frac{2\pi |\la|}{\tanh(2t \pi |\la|)} \frac{r^2}4} r^{1-\alpha} \frac{\pi |\la|}{\sinh(2t \pi |\la|)}   
\\
& \times \int_0^\infty \rho^{\alpha} e^{-\frac{2\pi |\la|}{\tanh(2t \pi |\la|)} \frac{\rho^2}4} I_{\alpha - 1}(\frac{\pi |\la| \rho r}{\sinh(2t\pi |\la|)})  \hat \vf(\rho,\la) d\rho
\notag\end{align}
Taking the inverse Fourier transform with respect to the variable $\la\in \R^k$ in \eqref{gus6}, we find
\begin{align}\label{gus7}
u((r,\sigma),t) & = r^{1-\alpha} \int_{\R^k} e^{2\pi i\sa \sigma,\la\da} e^{-\frac{2\pi |\la|}{\tanh(2t \pi |\la|)} \frac{r^2}4}  \frac{\pi |\la|}{\sinh(2t \pi |\la|)}  
\\
& \times \int_0^\infty \rho^{\alpha} e^{-\frac{2\pi |\la|}{\tanh(2t \pi |\la|)} \frac{\rho^2}4}I_{\alpha - 1}(\frac{\pi |\la|\rho r}{\sinh(2t\pi |\la|)}) 
 \hat \vf(\rho,\la)  d\rho d\la
\notag\\
& = r^{1-\alpha} \int_{\R^k} e^{2\pi i\sa \sigma,\la\da} e^{-\frac{2\pi |\la|}{\tanh(2t \pi |\la|)} \frac{r^2}4}  \frac{ \pi |\la|}{\sinh(2t \pi |\la|)}  
\notag
\\
& \times \int_0^\infty \rho^{\alpha} e^{-\frac{2\pi |\la|}{\tanh(2t \pi |\la|)} \frac{\rho^2}4} I_{\alpha - 1}(\frac{\pi |\la| \rho r}{\sinh(2t\pi |\la|)}) d\la\int_{\R^k} e^{-2\pi i\sa\la,\sigma'\da} \vf(\rho,\sigma') d\sigma' d\rho.
\notag
\end{align}
Reordering integrals, we obtain from \eqref{gus7}
\begin{align}\label{gus8}
u((r,\sigma),t) & = \int_0^\infty \int_{\R^k}\mathscr K_{\alpha,k}((r,\sigma),(\rho,\sigma'),t)  \vf(\rho,\sigma')   d\sigma' \rho^{2\alpha-1} d\rho
\end{align}
where
\begin{align*}\label{gus9}
\mathscr K_{\alpha,k}((r,\sigma),(\rho,\sigma'),t) & = (r\rho)^{1-\alpha}  \int_{\R^k} e^{-2\pi i\sa \sigma'-\sigma,\la\da} \frac{\pi |\la|}{\sinh(2t \pi |\la|)}
\\
& \times  e^{-\frac{2\pi |\la|}{\tanh(2t \pi |\la|)} \frac{r^2 +\rho^2}4} I_{\alpha - 1}(\frac{\pi |\la| \rho r}{\sinh(2t\pi |\la|)})\ d\la. 
\notag
\end{align*}
Setting $\la'=2\pi t\la$, so that $|\la'| = 2\pi t |\la|$, and $d\la' = (2\pi t)^k d\la$, we finally obtain \eqref{gus10}, thus completing the proof of Theorem \ref{T:gengus}.

\medskip

We next turn to the 
\begin{proof}[Proof of Proposition \ref{P:limit}]
Throughout, we fix $\alpha>0$, $k\in\mathbb N$, $r>0$, $t>0$ and $\sigma\in\mathbb R^k$.
If we define
\[
\Phi_{\rho,\sigma'}(\lambda) = \frac{(r\rho)^{1-\alpha}}{\pi^k (2t)^{k+1}}
e^{-\frac{i}{t}\langle \sigma'-\sigma,\lambda\rangle}
\frac{|\lambda|}{\sinh |\lambda|}
e^{-\frac{|\lambda|}{\tanh |\lambda|}
\frac{r^2+\rho^2}{4t}}
I_{\alpha-1}\!\left(
\frac{|\lambda|\rho r}{2t\sinh |\lambda|}
\right),
\]
then from \eqref{gus10} we have
\[
\mathscr K_{\alpha,k}((r,\sigma),(\rho,\sigma'),t)  = \int_{\R^k} \Phi_{\rho,\sigma'}(\lambda) d\la.
\]
If we let
\begin{equation}\label{s}
s_\rho(\lambda):= \frac{|\lambda|\rho r}{2t\sinh |\lambda|}, 
\end{equation}
then since $s_\rho(\lambda)\to0$ as $\rho\to0$, from the power series representation of $I_\nu$, see e.g. \cite[(5.7.1) on p. 108]{Le}
 \[
I_\nu(z) = \sum_{k=0}^\infty \frac{(z/2)^{\nu+2k}}{k! \G(\nu+k+1)},
\] 
we see that, with $\nu=\alpha-1> -1$, we have
\begin{align*}
& (r\rho)^{1-\alpha} I_{\alpha-1}(s_\rho(\lambda))
= \left(\frac{2t\sinh |\la|}{|\la|}\right)^{1-\alpha} s_\rho(\lambda)^{1-\alpha}I_{\alpha-1}(s_\rho(\lambda))
 \underset{\rho\to 0^+}{\longrightarrow}\  \frac{2^{1-\alpha}}{\Gamma(\alpha)}\left(\frac{2t \sinh |\la|}{|\la|}\right)^{1-\alpha}.
\end{align*}
This implies for each fixed $\lambda\in\mathbb R^k$ 
\begin{equation}\label{pointwise}
\lim_{(\rho,\sigma')\to(0,0)} \Phi_{\rho,\sigma'}(\lambda)
=\frac{1}{\Gamma(\alpha)(4t)^{\alpha-1}}e^{\,\frac{i}{t}\langle\sigma,\lambda\rangle}
\left(\frac{|\lambda|}{\sinh|\lambda|}\right)^{\alpha}
e^{-\frac{|\lambda|}{\tanh|\lambda|}\frac{r^2}{4t}} \frac{1}{\pi^k (2t)^{k+1}}.
\end{equation}
Therefore, if we can interchange the limit with the integral, we infer that
\[
\lim_{(\rho,\sigma')\to(0,0)} \mathscr K_{\alpha,k}((r,\sigma),(\rho,\sigma'),t)  = \frac{2\pi^\alpha}{\Gamma(\alpha)} \frac{2^{k}}{(4\pi t)^{\alpha+k}} \int_{\R^k} e^{\,\frac{i}{t}\langle\sigma,\lambda\rangle}
\left(\frac{|\lambda|}{\sinh|\lambda|}\right)^{\alpha}
e^{-\frac{|\lambda|}{\tanh|\lambda|}\frac{r^2}{4t}} d\la.
\]
Comparing with \eqref{gusrad}, we conclude that \eqref{limit} holds.

We now fix $\delta\le 1$, $\rho_0>0$ such that  
\begin{equation}\label{rho}
\frac{\rho_0 r}{2t} \le1,
\end{equation}
and define $Q_0 :=(0,\rho_0]\times B_{\delta}(0)$. To finish the proof, we only need to produce a dominating function $M(\lambda)\ge 0$ such that $\int_{\mathbb R^k} M(\lambda)\,d\lambda<\infty$, and for which for all $(\rho,\sigma')\in Q_0$, we have
\begin{equation}\label{dominate}
\left|\Phi_{\rho,\sigma'}(\lambda)\right|\le M(\lambda),\qquad\text{for a.e. }\lambda\in\mathbb R^k.
\end{equation}
Keeping in mnd that $\frac{s}{\sinh s}\le 1$ for $s\ge 0$, 
we have from \eqref{rho}
\[
0\le s_\rho(\lambda) \le \frac{\rho r}{2t} \le 1,\ \ \ \ \ \text{when}\ 0\le \rho\le \rho_0.
\]
Since by the power series representation of $I_\nu$ we have 
\[
0\le I_\nu(s)\le C_\nu s^\nu,\ \ \ \ \ 0\le s\le 1,
\]
we infer that for $0\le \rho\le \rho_0$
\[
I_{\alpha-1}\left(\frac{|\lambda|\rho r}{2t\sinh |\lambda|}\right) \le C_\alpha \left(\frac{|\lambda|\rho r}{2t\sinh |\lambda|}\right)^{\alpha-1}.
\]
Noting that $\frac{s}{\tanh s} \ge 1$ for every $s\ge 0$, we conclude that for every $(\rho,\sigma')\in Q_0$ and every $\la\in \R^k$, we have
\[
|\Phi_{\rho,\sigma'}(\lambda)| \le \frac{C_\alpha}{\pi^k (2t)^{k+\alpha}} e^{-
\frac{r^2}{4t}}
\left(\frac{|\lambda|}{\sinh |\lambda|}\right)^{\alpha} := M(\la)\in L^1(\R^k).
\]
This completes the proof.

\end{proof}

\section{Proof of Theorem \ref{T:gen}}\label{S:gen}

We say that a  function is spherically symmetric in $\R^k$ if $f(x) = f^\star(|x|)$, for some $f^\star$. When $k=1$, we simply intend that $f$ is even, and we let $f^\star(r) = f(r)$ for $\ge 0$. We begin by recalling the following classical formula due to Bochner, see \cite[Theor. 40 on p. 69]{BC}. 

\begin{proposition}\label{P:boch}
Assume that $k\ge 1$. Then, for any $\xi\in \R^k$ one has
\[
\hat f(\xi) = \frac{2\pi}{|\xi|^{\frac k2 - 1}} \int_0^\infty r^{\frac k2} f^\star(r) 
J_{\frac{k}2-1}(2\pi |\xi| r) dr.
\]
\end{proposition}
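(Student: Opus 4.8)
The plan is to reduce the $k$-dimensional Fourier transform of the radial function $f$ to a one-dimensional Hankel-type integral, the whole reduction resting on a single spherical exponential average. With the convention $\hat f(\xi)=\int_{\R^k}e^{-2\pi i\langle\xi,x\rangle}f(x)\,dx$ used throughout the paper (and assuming $f\in L^1(\R^k)$, so that $\hat f$ makes sense), I first pass to polar coordinates $x=r\omega$, $r=|x|>0$, $\omega\in\mathbb{S}^{k-1}$, so that $dx=r^{k-1}\,dr\,d\sigma(\omega)$, and interchange the radial and spherical integrations (legitimate by absolute integrability) to obtain
\[
\hat f(\xi)=\int_0^\infty f^\star(r)\,r^{k-1}\left(\int_{\mathbb{S}^{k-1}}e^{-2\pi i r\langle\xi,\omega\rangle}\,d\sigma(\omega)\right)dr.
\]
Everything then hinges on the inner integral, which by the rotation invariance of $d\sigma$ depends only on $z:=2\pi r|\xi|$ and on the unit vector $e:=\xi/|\xi|$.

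The heart of the matter is the identity
\[
\int_{\mathbb{S}^{k-1}}e^{-iz\langle e,\omega\rangle}\,d\sigma(\omega)=(2\pi)^{k/2}\,z^{1-k/2}\,J_{k/2-1}(z),
\]
which is the oscillatory (imaginary-argument) analogue of the modified-Bessel formula with $I_{m/2-1}$ already invoked in the proof of Theorem \ref{T:gengus}. For $k\ge 2$ I would prove it by slicing $\mathbb{S}^{k-1}$ along the axis $e$: writing $t=\langle e,\omega\rangle\in[-1,1]$, the coarea formula gives $d\sigma(\omega)=|\mathbb{S}^{k-2}|\,(1-t^2)^{(k-3)/2}\,dt$ with $|\mathbb{S}^{k-2}|=2\pi^{(k-1)/2}/\Gamma(\tfrac{k-1}{2})$, so the surface integral collapses to $|\mathbb{S}^{k-2}|\int_{-1}^1 e^{-izt}(1-t^2)^{(k-3)/2}\,dt$. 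Setting $\nu=k/2-1$, so that $\nu-\tfrac12=(k-3)/2$, I recognize this through the Poisson integral representation
\[
J_\nu(z)=\frac{(z/2)^\nu}{\Gamma(\nu+\tfrac12)\Gamma(\tfrac12)}\int_{-1}^1 e^{izt}(1-t^2)^{\nu-\tfrac12}\,dt,
\]
the integrals with $e^{\pm izt}$ coinciding by the substitution $t\mapsto-t$ and the evenness of the weight. Collecting the Gamma factors, using $\Gamma(\tfrac12)=\pi^{1/2}$ and the cancellation of $\Gamma(\tfrac{k-1}{2})$, produces exactly the constant $(2\pi)^{k/2}$.

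With the spherical integral in hand, I substitute $z=2\pi r|\xi|$, whereupon the powers of $2\pi$ combine to a single factor and
\[
\int_{\mathbb{S}^{k-1}}e^{-2\pi i r\langle\xi,\omega\rangle}\,d\sigma(\omega)=2\pi\,(r|\xi|)^{1-k/2}\,J_{k/2-1}(2\pi r|\xi|).
\]
Inserting this into the radial integral and combining $r^{k-1}\cdot r^{1-k/2}=r^{k/2}$ together with $|\xi|^{1-k/2}=|\xi|^{-(k/2-1)}$ yields precisely the asserted formula. The one genuinely delicate point, and the main obstacle, is the correct bookkeeping of the normalizing constants across the slicing measure and the Poisson representation; the rest is routine. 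Finally, the borderline case $k=1$ falls outside the slicing argument, since there $\mathbb{S}^0=\{\pm1\}$, but it is checked directly: via $J_{-1/2}(z)=\sqrt{2/(\pi z)}\cos z$ the formula reduces to the cosine transform $\hat f(\xi)=2\int_0^\infty f^\star(r)\cos(2\pi|\xi|r)\,dr$ of the even function $f$, which is elementary.
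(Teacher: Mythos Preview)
The paper does not actually prove this proposition; it merely recalls it as the classical Bochner formula, citing \cite[Theor.~40 on p.~69]{BC}, and adds only the remark that the case $k=1$ follows from the identity $J_{-1/2}(z)=\sqrt{2/(\pi z)}\cos z$. Your argument is a correct and self-contained proof of this classical result: the passage to polar coordinates, the evaluation of the spherical exponential average via slicing along the axis $e$ combined with the Poisson representation of $J_\nu$, and the bookkeeping of the constants (in particular the cancellation of $\Gamma(\tfrac{k-1}{2})$ yielding the clean factor $(2\pi)^{k/2}$) are all accurate. Your separate treatment of $k=1$ via the cosine-transform identity matches the paper's own comment verbatim. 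In short, you have supplied a full proof where the paper only gives a reference.
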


To see that the above formula continues to be valid when $k=1$, one needs to use the well-known identity 
\[
J_{-1/2}(z) = \sqrt{\frac{2}{\pi z}} \cos z,
\]
see \cite[(5.8.2) on p.111]{Le}. We then turn to the proof of Theorem \ref{T:gen}.

From \eqref{gen}, using Cavalieri's principle,  we find
\begin{align*}
& \mathscr E_{\alpha,\beta}(z,\sigma) = 2^k (4\pi)^{-\beta} \int_0^\infty \frac{1}{ t^{\beta-1}}\int_{\R^k} e^{-\frac it \langle\sigma,\la\rangle} \left(\frac{|\la|}{\sinh |\la|}\right)^{\alpha}  e^{- \frac{|z|^2}{4t} \frac{|\la|}{\tanh |\la|}}d\la\frac{dt}t
\\
& = 2^k (4\pi)^{-\beta} \int_0^\infty t^{\beta-2}\int_{\R^k} e^{-2\pi i  \langle\frac{t\sigma}{2\pi},\la\rangle} \left(\frac{|\la|}{\sinh |\la|}\right)^{\alpha}  e^{- \frac{t |z|^2}{4} \frac{|\la|}{\tanh |\la|}}d\la dt
\\
& = 2^k (4\pi)^{-\beta} \int_0^\infty t^{\beta-2} \hat f(\frac{t\sigma}{2\pi}) dt, 
\end{align*}
where we have let 
\begin{equation}\label{f}
f(\la) = \left(\frac{|\la|}{\sinh |\la|}\right)^{\alpha}  e^{- \frac{t |z|^2}{4} \frac{|\la|}{\tanh |\la|}}.
\end{equation}
Invoking Proposition \ref{P:boch}, we find
\[
\hat f(\frac{t\sigma}{2\pi}) = \frac{(2\pi)^{\frac k2}}{t^{\frac k2-1} |\sigma|^{\frac k2-1}} \int_0^\infty r^{\frac k2} \left(\frac{r}{\sinh r}\right)^{\alpha}  e^{- \frac{t |z|^2}{4} \frac{r}{\tanh r}} 
J_{\frac{k}2-1}(t |\sigma| r) dr.
\] 
Substituting this identity in the above expression of $\mathscr E_{\alpha,\beta}(z,\sigma)$, and exchanging the order of integration, we find
\begin{align}\label{E2}
& \mathscr E_{\alpha,\beta}(z,\sigma) =  \frac{2^k (4\pi)^{-\beta} (2\pi)^{\frac k2}}{|\sigma|^{\frac k2-1}} \int_0^\infty r^{\frac k2} \left(\frac{r}{\sinh r}\right)^{\alpha}  \int_0^\infty t^{\beta- \frac k2 -1} e^{- \frac{t |z|^2}{4} \frac{r}{\tanh r}} 
J_{\frac{k}2-1}(t |\sigma| r) dt dr.
\end{align}
We next recall the following classical formula due to Gegenbauer, see (3) on p. 385 in \cite{Wa}  
\begin{equation}\label{ohoh}
\int_0^\infty t^{\mu-1} e^{- a t} J_\nu(b t) dt = \frac{2^{-\nu} b^\nu \G(\nu+\mu)}{\G(\nu+1) (a^2 + b^2)^{\frac{\nu+\mu}2}} F\left(\frac{\nu+\mu}{2},\frac{1-\mu+\nu}{2};\nu+1;\frac{b^2}{a^2 + b^2}\right), 
\end{equation}
provided that
\[
\Re(\nu+\mu)>0,\ \Re(a+i b)>0,\ \Re(a-i b)>0.
\]
We apply \eqref{ohoh} with the choice
\[
\nu = \frac k2 -1,\ \ \ \mu = \beta - \frac k2,\ \ \ a = \frac{|z|^2}{4} \frac{r}{\tanh r},\ \ \ b = r |\sigma|.
\]
This gives
\[
\nu + \mu = \beta-1>0,\ \ \ \ 1-\mu+\nu = k-\beta.
\]
Notice that
\[
a^2 + b^2 = \frac{r^2}{16 \tanh^2 r} (|z|^4 + 16 |\sigma|^2 \tanh^2 r),\ \ \ \frac{\beta^2}{\alpha^2 + \beta^2} = \frac{16 |\sigma|^2 \tanh^2 r}{|z|^4 + 16 |\sigma|^2 \tanh^2 r}.
\]
We obtain
\begin{align*}
& \int_0^\infty t^{\beta- \frac k2 -1} e^{- \frac{t |z|^2}{4} \frac{r}{\tanh r}} 
J_{\frac{k}2-1}(t |\sigma| r) dt 
\\
& = \frac{2^{1-\frac k2} (r |\sigma|)^{\frac k2 -1} \G(\beta-1)}{\G(\frac k2) (|z|^4 + 16|\sigma|^2 \tanh^2 r)^{\frac{\beta-1}2}} \frac{(16 \tanh^2 r)^{\frac{\beta-1}2}}{r^{\beta-1}} F\left(\frac{\beta-1}{2},\frac{k-\beta}{2};\frac k2;\frac{16|\sigma|^2 \tanh^2 r}{|z|^4 + 16|\sigma|^2 \tanh^2 r}\right).
\end{align*}
Substituting this formula in \eqref{E2}, and at this point using the crucial assumption $\boxed{\beta = \alpha+k}$, we see that the powers of $r$ cancel, and we find
\begin{align}\label{E3}
& \mathscr E_{\alpha,\beta}(z,\sigma) =  \frac{2^{1+\frac k2} (4\pi)^{-\beta} (2\pi)^{\frac k2}\G(\beta-1)4^{\beta-1}}{\G(\frac k2)} \int_0^\infty r^{k+\alpha-\beta} \left(\frac{1}{\sinh r}\right)^{\alpha} (\tanh r)^{\beta-1}
\\
& \times \frac{1}{ (|z|^4 + 16|\sigma|^2 \tanh^2 r)^{\frac{\beta-1}2}} F\left(\frac{\beta-1}{2},\frac{k-\beta}{2};\frac k2;\frac{16|\sigma|^2 \tanh^2 r}{|z|^4 + 16|\sigma|^2 \tanh^2 r}\right) dr
\notag
\\
& = \frac{2^{k-1} \pi^{\frac k2-\beta}\G(\beta-1)}{\G(\frac k2)} \int_0^\infty \left(\frac{1}{\sinh r}\right)^{\alpha} (\tanh r)^{\alpha}(\tanh r)^{\beta-\alpha-1}
\notag
\\
& \times \frac{1}{ (|z|^4 + 16|\sigma|^2 \tanh^2 r)^{\frac{\beta-1}2}} F\left(\frac{\beta-1}{2},\frac{k-\beta}{2};\frac k2;\frac{16|\sigma|^2 \tanh^2 r}{|z|^4 + 16|\sigma|^2 \tanh^2 r}\right) dr
\notag
\\
& = \frac{2^{k-2} \pi^{\frac k2-\beta}\G(\beta-1)}{\G(\frac k2)} \int_0^\infty \left(\frac{1}{\cosh^2 r}\right)^{\frac{\alpha}2 -1} (\tanh^2 r)^{\frac{\beta-\alpha}{2}-1} \frac{1}{ (|z|^4 + 16|\sigma|^2 \tanh^2 r)^{\frac{\beta-1}2}}
\notag
\\
& \times  F\left(\frac{\beta-1}{2},\frac{k-\beta}{2};\frac k2;\frac{16|\sigma|^2 \tanh^2 r}{|z|^4 + 16|\sigma|^2 \tanh^2 r}\right) \frac{2 \tanh r}{\cosh^2 r} dr
\notag
\\
& = \frac{2^{k-2} \pi^{\frac k2-\beta}\G(\beta-1)}{\G(\frac k2)} \int_0^\infty \left(1-\tanh^2 r\right)^{\frac{\alpha}2 -1} (\tanh^2 r)^{\frac{\beta-\alpha}{2}-1} \frac{1}{ (|z|^4 + 16|\sigma|^2 \tanh^2 r)^{\frac{\beta-1}2}}
\notag
\\
& \times  F\left(\frac{\beta-1}{2},\frac{k-\beta}{2};\frac k2;\frac{16|\sigma|^2 \tanh^2 r}{|z|^4 + 16|\sigma|^2 \tanh^2 r}\right) \frac{2 \tanh r}{\cosh^2 r} dr.
\notag
\end{align}
In the integral in the right-hand side of \eqref{E3} we now make the change of variable $y = \tanh^2 r$, for which $dy = \frac{2\tanh r}{\cosh^2 r} dr$, obtaining
\begin{align}\label{E3}
& \mathscr E_{\alpha,\beta}(z,\sigma) = \frac{2^{k-2} \pi^{\frac k2-\beta}\G(\beta-1)}{\G(\frac k2)} \int_0^1 \left(1-y\right)^{\frac{\alpha}2 -1} y^{\frac{\beta-\alpha}{2}-1} \frac{1}{ (|z|^4 + 16|\sigma|^2 y)^{\frac{\beta-1}2}}
\\
& \times  F\left(\frac{\beta-1}{2},\frac{k-\beta}{2};\frac k2;\frac{16|\sigma|^2 y}{|z|^4 + 16|\sigma|^2 y}\right)  dy.
\notag
\end{align}
Next, we recall the following Kummer's relation concerning how the hypergeometric function $F$ changes under linear transformations (see  formula (3) on p. 105 in \cite{E}, or also (9.5.1) on p. 247 in \cite{Le}),
\begin{equation}\label{hyperG}
F(a,b;c;u) = (1-u)^{-a} F\left(a,c - b;c;\frac{u}{u-1}\right),\ \ \ \ \ \ u\not=1, \ |\arg(1-u)|<\pi, 
\end{equation}
valid for any $a, b,c\in \mathbb C$, with $c\not= 0, - 1, - 2,...$ We use \eqref{hyperG} with the choices
\[
a = \frac{\beta-1}2,\ \ \ c = \frac k2,\ \ \ c - b = \frac{k-\beta}{2},\ \ \ \frac{u}{u-1} = \frac{16 |\sigma|^2 y}{|z|^4 + 16 |\sigma|^2 y}.
\]
Notice that, with these choices, we have 
\[
b = \frac \beta{2},
\]
and that 
\[
u = - \frac{16|\sigma|^2}{|z|^4} y,\ \ \ \ \ 1-u = \frac{|z|^4 + 16 |\sigma|^2 y}{|z|^4}.
\] 
We thus find from \eqref{hyperG}
\begin{align}\label{holyminchias}
& F\left(\frac{\beta-1}{2},\frac{k-\beta}{2};\frac k2;\frac{16|\sigma|^2 y}{|z|^4 + 16|\sigma|^2 y}\right)  
 = \frac{(|z|^4 + 16 |\sigma|^2 y)^{\frac{\beta-1}2}}{|z|^{2(\beta-1)}}  F\left(\frac{\beta-1}{2},\frac{\beta}2;\frac k2;- \frac{16|\sigma|^2}{|z|^4} y\right).
\end{align}
We now use \eqref{holyminchias} in \eqref{E3}, obtaining
\begin{align}\label{E4}
& \mathscr E_{\alpha,\beta}(z,\sigma) = \frac{2^{k-2} \pi^{\frac k2-\beta}\G(\beta-1)}{\G(\frac k2)|z|^{2(\beta-1)}} \int_0^1 y^{\frac{\beta-\alpha}{2}-1} \left(1-y\right)^{\frac{\alpha}2 -1}  F\left(\frac{\beta-1}{2},\frac{\beta}2;\frac k2;- \frac{16|\sigma|^2}{|z|^4} y\right) dy
\end{align}
We now use the following formula due to H. Bateman (see formula (2) on p. 78 in \cite{E}, and also Problem 6. on p. 277 in \cite{Le})
\begin{equation}\label{hyperGint}
\int_0^1 y^{c-1} (1-y)^{\gamma-c-1} F(a,b;c;\delta y) dy = \frac{\G(c)\G(\gamma - c)}{\G(\gamma)} F(a,b;\gamma;\delta),
\end{equation}
provided that $\Re \gamma >\Re c>0$, $\delta\not=1$ and $|\arg(1-\delta)|<\pi$. To apply \eqref{hyperGint} we must choose
\begin{equation}\label{abc}
a = \frac{\beta-1}2,\ \ \ b = \frac{\beta}2,\ \ \ c = \frac k2>0,\ \ \ \delta = - \frac{16|\sigma|^2}{|z|^4}.
\end{equation}
Moreover, we must make sure that $c = \frac k2$, but this is true since $\beta - \alpha = k$ and comparing \eqref{hyperGint} with \eqref{E4} we see that $c = \frac{\beta-\alpha}{2}$. The same comparison shows that we must have $\gamma-c = \frac{\alpha}2$, which gives
\[
\gamma = c + \frac{\alpha}2 = \frac{\alpha+k}2 = \frac{\beta}2.
\] 
Applying \eqref{hyperGint} we thus find
\[
\int_0^1 y^{\frac{\beta-\alpha}{2}-1} \left(1-y\right)^{\frac{\alpha}2 -1}  F\left(\frac{\beta-1}{2},\frac{\beta}2;\frac k2;- \frac{16|\sigma|^2}{|z|^4} y\right) dy = \frac{\G(\frac k2)\G(\frac{\alpha}2)}{\G(\frac{\beta}2)} F(\frac{\beta-1}2,\frac{\beta}2;\frac{\beta}2;- \frac{16|\sigma|^2}{|z|^4}).
\]
We see now that a miracle has happened, in the sense that the hypergeometric function is in the special form
\begin{equation}\label{F}
F(a,b;b;-\delta) =\ _1F_0(a;-\delta) = (1+\delta)^{-a},
\end{equation}
see formula (4) on p. 101 in \cite{E}. We thus find
\begin{align*}
& F(\frac{\beta-1}2,\frac{\beta}2;\frac{\beta}2;- \frac{16|\sigma|^2}{|z|^4}) = \left(1+\frac{16|\sigma|^2}{|z|^4}\right)^{-\frac{\beta-1}2}
= \frac{|z|^{2(\beta-1)}}{\left(|z|^4 + 16 |\sigma|^2\right)^{2(\beta-1)}}.
\end{align*}
Substituting in the above, we find
\begin{align*}\label{E5}
& \mathscr E_{\alpha,\beta}(z,\sigma) = \frac{2^{k-2} \pi^{\frac k2-\beta}\G(\frac{\alpha}2)\G(\beta-1)}{\G(\frac{\beta}2)} \frac{1}{\left(|z|^4 + 16 |\sigma|^2\right)^{\frac{2(\beta-1)}{4}}}.
\end{align*} 
Using Legendre's duplication formula for the gamma function we have
\[
\frac{\G(\beta-1)}{\G(\frac{\beta}2)} = \frac{2^{\beta-1} \G(\frac{\beta+1}2)}{\sqrt \pi (\beta-1)}.
\]
Substituting in the previous formula, and using \eqref{N}, we finally obtain 
\begin{align*}\label{E5}
& \mathscr E_{\alpha,\beta}(z,\sigma) = \frac{2^{k-2} \pi^{\frac k2-\beta}\G(\frac{\alpha}2)\G(\beta-1)}{\G(\frac{\beta}2)} \frac{1}{\left(|z|^4 + 16 |\sigma|^2\right)^{\frac{2(\beta-1)}{4}}}.
\end{align*} 
Finally, keeping in mind that $\beta = \alpha + k$, and using again Legendre's formula, which gives
\[
\frac{\G(\beta-1)}{\G(\frac \beta2)} = \frac{\G(\alpha+k-1)}{\G(\frac{\alpha+k}2)} = \pi^{-1/2} 2^{\alpha+k-2} \G(\frac{\alpha+k-1}2),
\]
we finally obtain 
\begin{align*}
& \mathscr E_{\alpha,\alpha+k}(z,\sigma) = \frac{2^{\alpha + 2k-4}\G(\frac{\alpha}2) \G(\frac{\alpha + k -1}2)}{\pi^{\frac{2\alpha+k+1}2}} N(z,\sigma)^{-(2\alpha+2k-2)}.
\end{align*}
This establishes \eqref{gen}, thus completing the proof of Theorem \ref{T:gen}.

\section{Connections with nonlinear equations}\label{S:np}

In this final section we point out an intriguing connection between the above linear results and a nonlinear geometric flow that presently remains  unexplored. The discussion that follows is intended only as a preliminary indication of possible directions and its main motivation is to lead to the open Problems \ref{P:one} and \ref{P:two} below.
Consider in $\Rn$ the $p$-energy functional
\[
J_p(u) = \frac1p\int_{\Rn} |\nabla u|^p\, dx,
\]
whose Euler–Lagrange equation is the nonlinear $p$-Laplace equation
\begin{equation}\label{p}
\Delta_p u = \operatorname{div}(|\nabla u|^{p-2}\nabla u)=0.
\end{equation}
Its fundamental solution is the well-known power law
\begin{equation}\label{ep}
e_p(x)= C_{n,p}\, |x|^{-\frac{n-p}{p-1}},
\end{equation}
which reduces to the classical Newtonian potential when $p=2$.

The parabolic version $\partial_t u=\Delta_p u$ of \eqref{p} is strongly asymmetric in space and time, and  does not correspond to a geometric flow. A more appropriate evolution is the normalized $p$-Laplacian,
\[
\partial_t u = |\nabla u|^{2-p}\Delta_p u.
\]
At points of non-degeneracy of the gradient such equation can be equivalently written
\begin{equation}\label{np}
\partial_t u
   = \Delta u + (p-2)|\nabla u|^{-2}\Delta_\infty u,
\end{equation}
which connects in the limit $p\to 1$ to the mean curvature flow studied in the celebrated works \cite{ES1}, \cite{ES2}, \cite{ES3}, \cite{ES4}, \cite{SZ, CGG, CW, GGIS, ISZ}. The equation \eqref{np} enjoys a rich theory of viscosity solutions \cite{BG, Do} and reveals deep connections with stochastic tug-of-war games \cite{PS, MPR}. The a priori estimates in \cite{BG2} and the breakthrough regularity theory in \cite{JS} further highlight its geometric nature. In connection with Riemannian geometry, we mention the work in preparation \cite{BGM} where a Li-Yau theory and an optimal Harnack inequality have been developed for the flow \eqref{np} in the Riemannian case.

One remarkable aspect of the fully nonlinear operator \eqref{np}, which also constitutes its main link to the present work, is that on functions which are spherically symmetric in the space variable, it  acts as a linear operator in a fractal dimension. The key observation is that when $u(x,t)=f(r,t)$, $r=|x|$, then
\begin{equation}\label{fully}
|\nabla u|^{-2}\Delta_\infty u = f_{rr},
\end{equation}
so that, on such functions, the fully nonlinear equation \eqref{np} reduces to the linear
\begin{equation}\label{f}
f_t = (p-1)f_{rr} + \frac{n-1}{r}f_r.
\end{equation}
Rescaling time, $f(r,t) = g(r,(p-1)t)$, shows that $f$ is governed by the heat flow in a space $\R^\kappa\times (0,\infty)$ of ``fractal dimension''
\begin{equation}\label{k}
\kappa = \frac{n+p-2}{p-1}.
\end{equation}
This led the authors of \cite{BG} to discover the following notable solution of \eqref{np}:
\begin{equation}\label{gp}
g_p(x,t)
 = t^{-\frac{n+p-2}{2(p-1)}}
   \exp\bigg(-\frac{|x|^2}{4(p-1)t}\bigg).
\end{equation}
We stress that \eqref{gp} satisfies
\[
\int_{\Rn} g_p(x,t)\, dx
   = (4\pi(p-1))^{\frac n2}
     t^{\frac{(n-1)(p-2)}{2(p-1)}},
\]
and therefore it is not a fundamental solution, except when $p=2$ or $n=1$. Despite this, one still has the conformal property
\begin{equation}\label{intgp}
\int_0^\infty g_p(x,t)\, dt
   = c_{n,p}\, |x|^{-\frac{n-p}{p-1}},
\end{equation}
which mirrors the linear case $p=2$ in view of \eqref{ep}.

These considerations led us to ask whether the intertwining between linear and nonlinear structures observed in Euclidean space extends to the sub-Riemannian setting of the present study. For instance, in view of the cited works \cite{BG, BGM}, it would be quite interesting to know the answer to the following question.

\begin{prob}\label{P:one}
Does a counterpart $G_p((z,\sigma),t)$ of the prototypical solution \eqref{gp} exists in a group of Heisenberg type?
\end{prob}

Answering this question is a nontrivial task since in sub-Riemannian geometry there is no counterpart of the phenomenon \eqref{fully}. We show below that Theorems \ref{T:gengus} and \ref{T:gen} offer a possible candidate.

We recall that, in these Lie groups, the foundational work of Kor\'anyi-Reimann \cite{KoR, KoRaim}, Mostow \cite{Mo} and Margulis-Mostow \cite{MM} have underscored the role  of nonlinear equations such as
\begin{equation}\label{pH}
\Delta_{H,p} f = \operatorname{div}_H(|\nabla_H f|^{p-2}\nabla_H f)=0.
\end{equation}
A basic result (see \cite[Theor. 2.1]{CDGcap} and independently \cite{HH} for $p=Q$) shows that the Korányi gauge continues to govern nonlinear potential theory:
\begin{equation}\label{fs}
E_p(z,\sigma)
 = \begin{cases}
   C_p\, N(z,\sigma)^{-\frac{Q-p}{p-1}}, & p\neq Q, \\[6pt]
   C_p\, \log N(z,\sigma), & p=Q.
   \end{cases}
\end{equation}

\vskip 0.2in

\begin{prob}\label{P:two}
 In the Heisenberg group $\Hn$ we propose to study the following counterpart of the geometric flow \eqref{np}
\begin{equation}\label{fullyH}
\p_t u =  \Delta_H u + (p-2)|\nabla_H u|^{-2}\Delta_{H,\infty} u,\ \ \ \ \ \ 1<p<Q = 2n+2,
\end{equation}
where we have let $\Delta_{H,\infty} u = \frac 12 \sa\nh u,\nh(|\nh u|^2)\da$. More specifically, we propose to show existence and uniqueness of viscosity solutions of the Cauchy problem for \eqref{fullyH}, as well as the optimal regularity of their horizontal gradient. 
\end{prob}

\vskip 0.2in

Formally, \eqref{fullyH} converges as $p\to 1$ to the horizontal mean curvature flow studied in \cite{CC}, but the situation is now much more complex than its Euclidean  counterpart. The analysis of \eqref{fullyH} is presently lacking and it would lead to various interesting developments.

In connection with Problem \ref{P:one}, and motivated by the Euclidean formula \eqref{gp} and Theorem \ref{T:gen}, in the Heisenberg group $\Hn$ we introduce the following Fourier integral
\begin{align}\label{Gnew}
G_p((z,\sigma),t)
  &=
    t^{-\frac{Q+p-2}{2(p-1)}}
    \int_{\R}
    e^{-\frac{i}{(p-1)t}\langle\sigma,\lambda\rangle}
    \left(\frac{|\lambda|}{\sinh |\lambda|}\right)^{\frac{Q-p}{2(p-1)} }
    e^{-\frac{|z|^2}{4(p-1)t}\frac{|\lambda|}{\tanh |\lambda|}}
    \, d\lambda,\ \ \ 1<p<Q.
\end{align} 
We emphasize that, when $p=2$, the function \eqref{Gnew} reduces to a constant multiple of the classical Gaveau-Hulanicki kernel \eqref{GH}. Concerning \eqref{Gnew} we have the following consequence of our Theorem \ref{T:gen}.

\begin{proposition}\label{P:Gp}
For any $n\in \mathbb N$ and $1<p<Q = 2n+2$, there exists an explicit universal constant $C_{n,p}>0$ such that
\[
\int_0^\infty G_p((z,\sigma),t)\, dt
   =  C_{n,p}\, N(z,\sigma)^{-\frac{Q-p}{p-1}}.
\]
\end{proposition}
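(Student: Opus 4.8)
The plan is to recognize that, in the Heisenberg group where $k=1$, the Fourier integral \eqref{Gnew} is nothing but a constant multiple of the generalized Mehler kernel $G_{\alpha,\beta}$ of \eqref{GHgen} evaluated at a linearly rescaled time, for a suitable admissible pair $(\alpha,\beta)$; once this is seen, Theorem \ref{T:gen} does all the work.

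\textbf{Step 1 (choice of parameters).} I would set
\[
\alpha := \frac{Q-p}{2(p-1)},\qquad \beta := \frac{Q+p-2}{2(p-1)}.
\]
Since $1<p<Q$ we have $\alpha>0$, and a one-line computation gives $\beta-\alpha = \frac{2(p-1)}{2(p-1)} = 1 = k$. Hence $\beta = \alpha+k$ and the pair $(\alpha,\beta)$ lies in the range $\beta>\alpha>0$ to which Theorems \ref{T:gengus}–\ref{T:gen} apply. This coincidence $\beta=\alpha+k$ is the crux: it is exactly the dimensional restriction under which the conformal identity \eqref{gen} holds.

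\textbf{Step 2 (rescaling the time variable).} In \eqref{Gnew} I would substitute $\tau=(p-1)t$ throughout the integrand. Using $t^{-\frac{Q+p-2}{2(p-1)}} = (p-1)^{\beta}\tau^{-\beta}$, the two exponentials become $e^{-\frac{i}{\tau}\langle\sigma,\lambda\rangle}$ and $e^{-\frac{|z|^2}{4\tau}\frac{|\lambda|}{\tanh|\lambda|}}$, while the amplitude is $(|\lambda|/\sinh|\lambda|)^{\alpha}$. Comparing with \eqref{gusrad} — and recalling that for $k=1$ one has $G_{\alpha,\beta}=G^\star_{\alpha,\beta}$, the kernel being even in $\sigma$ — we obtain
\[
G_p((z,\sigma),t) = \frac{(4\pi(p-1))^{\beta}}{2}\, G_{\alpha,\beta}\big((z,\sigma),(p-1)t\big).
\]

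\textbf{Step 3 (integrate in $t$ and conclude).} Integrating over $t\in(0,\infty)$ and changing variables $\tau=(p-1)t$ gives
\[
\int_0^\infty G_p((z,\sigma),t)\,dt = \frac{(4\pi(p-1))^{\beta}}{2(p-1)}\int_0^\infty G_{\alpha,\beta}\big((z,\sigma),\tau\big)\,d\tau = \frac{(4\pi(p-1))^{\beta}}{2(p-1)}\,\mathscr E_{\alpha,\alpha+k}(z,\sigma).
\]
By Theorem \ref{T:gen} with $k=1$, and noting $2\alpha+2k-2 = 2\alpha = \frac{Q-p}{p-1}$,
\[
\mathscr E_{\alpha,\alpha+1}(z,\sigma) = \frac{2^{\alpha-2}\,\Gamma(\tfrac{\alpha}{2})^2}{\pi^{\alpha+1}}\, N(z,\sigma)^{-\frac{Q-p}{p-1}}.
\]
Collecting constants (using $(4\pi)^\beta = 2^{2\beta}\pi^\beta$ with $\beta=\alpha+1$) then yields the statement with the explicit value $C_{n,p}=2^{3\alpha-1}(p-1)^{\alpha}\,\Gamma(\tfrac{\alpha}{2})^2>0$, where $\alpha=\frac{Q-p}{2(p-1)}$; in particular $C_{n,p}$ is a universal function of $n$ and $p$.

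\textbf{On the difficulty.} There is essentially no analytic obstacle here, since all the substantive content is already packaged in Theorem \ref{T:gen}. The only points requiring care are the verification of the identity $\beta=\alpha+k$ — which is precisely what makes the conformal theorem applicable to \eqref{Gnew} — and the routine bookkeeping of the constants through the time rescaling $\tau=(p-1)t$.
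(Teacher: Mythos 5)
Your proof is correct and follows essentially the same route as the paper: set $\alpha=\frac{Q-p}{2(p-1)}$, observe that $k=1$ forces $\beta=\alpha+1=\frac{Q+p-2}{2(p-1)}$ so that the conformal Theorem \ref{T:gen} applies after the time rescaling $\tau=(p-1)t$. The paper's own proof is a two-line observation that leaves the constant implicit, whereas your bookkeeping yielding $C_{n,p}=2^{3\alpha-1}(p-1)^{\alpha}\,\Gamma(\tfrac{\alpha}{2})^2$ is a correct and welcome addition.
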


\begin{proof}
The proof of Proposition \ref{P:Gp} follows by taking $\alpha = \frac{Q-p}{2(p-1)}$ in Theorem \ref{T:gen}, and noting that, since for the Heisenberg group $\Hn$ one has $k=1$, we presently have
\[
\beta = \alpha + k = \frac{Q-p}{2(p-1)} + 1 = \frac{Q+p-2}{2(p-1)}.
\]
This observation shows that, up to the time scaling $t\to (p-1)t$, the function \eqref{Gnew} is exactly a $G_{\alpha,\alpha+k}$ as in Theorem \ref{T:gen}, with $k=1$ and $\alpha$ as above.

\end{proof}
Thus in $\Hn$ the nonlinear potential \eqref{fs} arises from the time integral of the kernel \eqref{Gnew}, just as in the Euclidean identity \eqref{intgp}. This parallel strongly suggests a deeper relationship between the kernel $G_p$ and the fully nonlinear operator \eqref{pH}, and Proposition \ref{P:Gp} seems to indicate in the function \eqref{Gnew} the appropriate sub-Riemannian analogue of the prototypical solution \eqref{gp}. However, the actual verification that $G_p$ solves \eqref{fullyH} involves complex computations and it is presently unresolved.

We mention in closing that, for both Problems \ref{P:one} and \ref{P:two}, we have some interesting work in progress.


\bibliographystyle{amsplain}

\end{document}